	\theoremstyle{definition}
	\newtheorem{thm}{Theorem}
	\newtheorem*{thm*}{Theorem}
	\newtheorem{defi}[thm]{Definition} 
	\newtheorem{rmk}[thm]{Remark}
\numberwithin{equation}{section}
\numberwithin{thm}{subsection}
\newlist{sumside}{itemize}{2}
\setlist[sumside]{topsep=0pt,label=-,leftmargin=2em,noitemsep}
\newcommand\myshade{85}
\definecolor{darkolivegreen}{rgb}{0.33, 0.42, 0.18}
\definecolor{darklavender}{rgb}{0.45, 0.31, 0.59}
\definecolor{darkraspberry}{rgb}{0.53, 0.15, 0.34}
\definecolor{darkred}{rgb}{0.55, 0.0, 0.0}
\colorlet{mylinkcolor}{darkred}
\colorlet{mycitecolor}{YellowOrange}
\colorlet{myurlcolor}{Aquamarine}
\newcommand{\ignore}[1]{}
\patchcmd{\section}{\scshape}{\bfseries}{}{}
\renewcommand{\@secnumfont}{\bfseries}
\patchcmd{\@settitle}{\uppercasenonmath\@title}{\Large}{}{}
\patchcmd{\@setauthors}{\MakeUppercase}{\normalsize}{}{}
\def\th@definition{%
	\thm@headfont{\bfseries}
	\normalfont 
}
\begin{document}
\thispagestyle{empty}

\title{Staircases to analytic sum-sides for many new integer partition identities of Rogers-Ramanujan type}

\author{Shashank Kanade\textsuperscript{1}}
\address{\textsuperscript{1} University of Denver, Denver, USA}
\thanks{S.K. was partially supported by the PIMS Post-doctoral Fellowship awarded by Pacific Institute for the Mathematical Sciences, the Endeavour Research Fellowship (2017) awarded by the Department of Education and Training, Australian Government and
by a Start-up Grant provided by University of Denver.}
\email{shashank.kanade@du.edu}

\author{Matthew C.\ Russell\textsuperscript{2}}
\address{\textsuperscript{2} Rutgers, The State University of New Jersey, Piscataway, USA}
\email{russell2@math.rutgers.edu}

\begin{abstract}
	We utilize the technique of staircases and jagged partitions to provide analytic sum-sides to some old and new partition identities
	of Rogers-Ramanujan type.
	Firstly, we conjecture a class of new partition identities related to the principally specialized characters of certain level $2$ modules for the affine Lie algebra $A_9^{(2)}$. 
	Secondly, we provide analytic sum-sides to some earlier conjectures of the authors. 
	Next, we use these analytic sum-sides to discover a number of further generalizations. 
	Lastly, we apply this technique to the well-known Capparelli 
	identities and present analytic sum-sides which we believe to be new.
	All of the new conjectures presented in this article are supported by a strong mathematical evidence.
\end{abstract}

\maketitle

\section{Introduction}
Recall the famous Rogers-Ramanujan identities \cite{And-book,Sills-book}:
\begin{thm*} For any positive integer $n$ we have:
	\begin{enumerate}[noitemsep,topsep=0pt]
		\item Number of partitions of $n$ in which adjacent parts differ by at least $2$
		is the same as the number of partitions of $n$ in which each part is $\equiv \pm 1\pmod{5}$.
		
		In generating function form, we have:
		\begin{equation}
		\sum\limits_{n\geq 0}\dfrac{q^{n^2}}{(1-q)(1-q^2)\cdots(1-q^n)} = \prod\limits_{
			\substack{m\geq 1\\m\, \equiv\, \pm 1\,\,(\mathrm{mod}\, 5)   }}\frac{1}{1-q^m}.
		\end{equation}
 	\item Number of partitions of $n$ in which adjacent parts differ by at least $2$ and $1$ does not appear as a part
		is the same as the number of partitions of $n$ in which each part is $\equiv \pm 2\pmod{5}$.
		
		In generating function form, we have:
		\begin{equation}
		\sum\limits_{n\geq 0}\dfrac{q^{n^2+n}}{(1-q)(1-q^2)\cdots(1-q^n)} = \prod\limits_{
			\substack{m\geq 1\\m\, \equiv\, \pm 2\,\,(\mathrm{mod}\, 5)}}\frac{1}{1-q^m}.
		\end{equation}
	\end{enumerate}
\end{thm*}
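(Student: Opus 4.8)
The plan is to treat each of the two statements in two stages: first reduce the combinatorial assertion to the displayed analytic (generating-function) identity, and then prove the analytic identity itself by a $q$-difference-equation argument. The reduction is exactly where the ``staircase'' idea enters. On the difference-at-least-$2$ side a partition $\lambda_1 > \lambda_2 > \cdots > \lambda_k$ with $\lambda_i - \lambda_{i+1} \geq 2$ (and, in case (2), $\lambda_k \geq 2$) is obtained from an \emph{arbitrary} partition into at most $k$ parts by adding the staircase $(2k-1, 2k-3, \dots, 3, 1)$, respectively $(2k, 2k-2, \dots, 4, 2)$. The staircase has weight $k^2$ (resp. $k^2+k$), and partitions into at most $k$ parts are generated by $1/(q;q)_k$, so summing over $k$ produces the left-hand sides $\sum_{k\geq 0} q^{k^2}/(q;q)_k$ and $\sum_{k\geq 0} q^{k^2+k}/(q;q)_k$. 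The right-hand products are the Euler generating functions for partitions whose parts lie in the prescribed residue classes mod $5$. Thus both combinatorial statements follow once the two analytic identities are established.

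For the analytic identity in case (1), I would introduce the homogenising parameter
\[
G(x) = \sum_{n\geq 0} \frac{x^n q^{n^2}}{(q;q)_n},
\]
so that $G(1)$ is the target sum. Forming $G(x) - G(xq)$ peels a factor $1-q^n$ off the denominator, leaving $\sum_{n\geq 1} x^n q^{n^2}/(q;q)_{n-1}$; reindexing $n = m+1$ and extracting $xq$ turns this into $xq\,G(xq^2)$. Hence
\[
G(x) = G(xq) + xq\,G(xq^2), \qquad G(0) = 1,
\]
and this recursion determines $G$ uniquely as a formal power series in $x$. It therefore suffices to exhibit the product side, suitably refined in a variable $x$, as a solution of the same recursion with the same constant term. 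The natural starting point is the theta form of the product: by the Jacobi triple product identity,
\[
(q;q)_\infty \prod_{m\,\equiv\,\pm 1\,(\mathrm{mod}\,5)} \frac{1}{1-q^m} = \sum_{n=-\infty}^{\infty} (-1)^n q^{n(5n-1)/2},
\]
with the analogous bilateral sum governing the $\pm 2$ product for case (2).

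The step I expect to be the genuine obstacle is this last matching. The sum side's recursion is essentially automatic, but the product side is rigid, and the work is to find the \emph{correct} two-variable refinement of the above theta sum and to verify that it satisfies $F(x) = F(xq) + xq\,F(xq^2)$ with $F(0)=1$. Coaxing the bilateral Jacobi-triple-product expression into a form where this recursion becomes visible requires the right parametrisation together with a careful shift of the summation index; writing down a plausible-looking refinement that fails the recursion is the main pitfall, so I would pin down the refinement by first computing low-order terms in $x$ against $G$ before attempting the general verification. Once $F$ is shown to satisfy the same functional equation and initial condition as $G$, uniqueness gives $F = G$, and setting $x=1$ yields both Rogers–Ramanujan identities simultaneously.

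An alternative that sidesteps the ad hoc theta manipulation is to deduce both identities from a single application of Bailey's lemma to the unit Bailey pair (equivalently, to take the limit as the relevant parameters tend to $\infty$ in Watson's $q$-analogue of Whipple's transformation). There the entire analytic burden is packaged inside the transformation formula, the Jacobi triple product enters only at the very end to recognise the limiting product, and the two identities emerge as the two natural specialisations $a=1$ and $a=q$. If the direct $q$-difference verification above proves recalcitrant, this Bailey-pair route is the fallback I would pursue, since its hardest ingredient is a known transformation rather than a bespoke theta computation.
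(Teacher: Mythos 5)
The paper never proves this theorem: it is recalled as classical background (with citations to Andrews' and Sills' books), and its role is only to fix the terminology of ``sum-side,'' ``analytic sum-side,'' and ``product-side.'' So your argument must stand on its own, and as written it does not --- it is a plan whose decisive step is missing. The parts you do carry out are correct: removing the staircase $(2k-1,2k-3,\dots,1)$ (resp.\ $(2k,2k-2,\dots,2)$) from a difference-at-least-$2$ partition with $k$ parts leaves an arbitrary partition into at most $k$ parts, which yields the sum sides $\sum_k q^{k^2}/(q;q)_k$ and $\sum_k q^{k^2+k}/(q;q)_k$ (this is the same staircase mechanism the paper exploits for its new identities); and the functional equation $G(x)=G(xq)+xq\,G(xq^2)$ with $G(0)=1$ is correctly derived and does determine $G$ uniquely, since the coefficient recursion $c_n(1-q^n)=c_{n-1}q^{2n-1}$ is forced.

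The gap is precisely the step you yourself flag as ``the genuine obstacle'': you never exhibit a two-variable refinement $F(x)$ of the product side, never verify that it satisfies the same $q$-difference equation, and never check $F(0)=1$, so the uniqueness argument has nothing to act on. That matching is not a routine calculation to be deferred --- it is the entire analytic content of the Rogers--Ramanujan identities, and it is exactly what Rogers, Schur, and Watson each had to build machinery for; for instance Rogers' refinement $F(x)=(xq;q)_\infty^{-1}\sum_{n\geq 0}(-1)^n x^{2n}q^{n(5n+1)/2}\bigl(1-xq^{2n+1}\bigr)\frac{(xq;q)_n}{(q;q)_n}$, whose verification of the recursion is a substantive computation and which only at $x=1$ collapses via the Jacobi triple product to the stated products. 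Saying you would ``pin down the refinement by first computing low-order terms,'' and offering Bailey's lemma or Watson's transformation as a fallback, names two known routes but executes neither. As submitted, the proposal establishes the (standard, easy) equivalence between the combinatorial and analytic formulations and then reduces both identities to an unproved lemma that is known to be the hard part.
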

In the above, we interpret the $q$-series identities as identities of purely formal series.
We introduce the terms ``partition-theoretic sum-side'' to refer to the difference conditions, 
``analytic sum-side'' to refer to the sum in the $q$-series expansions and ``product-side''
to the product in the $q$-series.
The present paper deals with experimentally finding many identities of Rogers-Ramanujan type as we now explain.

\subsection{Affine Lie algebras and integer partition identities}
Affine Lie algebras have been an important source of new and intriguing integer partition identities and $q$-series identities.
We refer the reader to  \cite{Sills-book} for an excellent review. 

Building on the work \cite{FL,LM}, in a series of papers \cite{LW1,LW2,LW3,LW4} Lepowsky and Wilson showed how to interpret and prove Rogers-Ramanujan type identities using principally specialized standard modules for affine Lie algebras. 
In these works, the product-sides of the identities arise from Weyl-Kac character formula combined with Lepowsky's numerator formula and the 
partition-theoretic sum-sides arise from Lepowsky-Wilson's $Z$-algebras.
In \cite{C1}, Capparelli used Meurman and Primc's variant \cite{MP1} of Lepowsky and Wilson's method of $Z$-algebras to conjecture new partition identities using level 3 standard modules for $A^{(2)}_2$. 

In this setup, the sum-side partition conditions (generically, these are multi-color partitions) follow by reducing a given Poincar\'e-Birkhoff-Witt-type spanning set to a basis --- a reduction that is achieved by using vertex-algebraic ``relations.''
One can therefore be assured that representations of any affine Lie algebra at any positive integral level lead to some sort of identities involving partitions, with the caveat that such identities are generically extremely complicated. Nonetheless, the point is that the characters of standard modules of affine Lie algebras
are a treasure trove of many interesting and as yet unknown integer partition identities.

In principle, the process of conjecturing partition identities using $Z$-algebras (or other vertex-algebraic methods) for any given affine Lie algebra at any given (positive integral) level could be utilized to discover new identities, however this gets notoriously tedious when the rank
of the algebra and/or the level of the module become large. This necessitates the need for new techniques of investigation.

In \cite{KR}, we initiated a study to discover new partition identities using experimental methods, and we discovered six new conjectural identities. Three of these six turned out to be related to principally specialized characters of the level $3$ standard modules for the affine Lie algebra $D_4^{(3)}$. In a current work in progress \cite{KNR}, we have initiated a search for identities that mimic certain $Z$-algebraic considerations.
However, in these works, a priori we were not specifically looking for identities related to affine Lie algebras. 
After the search, one had to check if the newly found identities would be the characters of standard modules for some affine Lie algebra.

In the present paper, we undertake the exploration in a fundamentally different philosophical direction. 
We start with a specific algebra at a specific level (the affine Lie algebra $A_9^{(2)}$ at level 2) and we start with the principally specialized characters of the vacuum spaces (with respect to the principal Heiseberg algebra) of the corresponding standard modules. These characters naturally factor into an infinite product due to Weyl-Kac character formula and Lepowsky's numerator formula. 
We then use experimental methods to conjecture corresponding partition-theoretic sum-sides.
For the algebras $A_{2n+1}^{(2)}$, the products arising from those level $2$ modules that are contained in the tensor product of two inequivalent level $1$ modules \cite{BM-crystal} seem promising from the partition-theoretic viewpoint: 
among other things, their inverse Euler transform is periodic with only entries being $0$ or $\pm 1$. By a slight abuse of terminology, we say that the inverse Euler transform of the $q$-series $\prod_{m\geq 1}(1-q^{m})^{-a_m}$ is the sequence $\{a_m\}_{m\geq 1}$.
The following table summarizes the known information and explains why $A_9^{(2)}$ was a natural candidate to explore (see \cite{B} for the corresponding product-sides):
\begin{center}
	\begin{small}
		{
			\def\arraystretch{1.4}
			\begin{tabular}{|c|c|c|}
				\hline 
				\textit{Algebra} & \textit{Product Sides/Identities} & \textit{Sum-side status} \\ 
				\hline \hline
				$A_3^{(2)}$ &  {Alladi's companion to Schur's identity \cite{And-310}} & 
				$Z$-algebraic interpretation given \cite{T} \\ 
				\hline 
				$A_5^{(2)}$ &  G\"ollnitz-Gordon identities & 
				$Z$-algebraic interpretation given \cite{K-GGZ} \\ 
				\hline 
				$A_7^{(2)}$ &  Rogers-Ramanujan identities & 
				$Z$-algebraic interpretation given in \cite{BM-RR} \\ 
				\hline 
				$A_9^{(2)}$
				&  Present article
				&  $Z$-algebraic interpretation is a future work \\ 
				\hline 
				$A_{11}^{(2)}$ & Nandi's products \cite{N} & 
				Sum-sides given in \cite{N} using level 4 modules for $A_2^{(2)}$ \\ 
				\hline 
			\end{tabular}
		}
	\end{small}
\end{center}
To the best of our knowledge, for $A_{2n+1}^{(2)}$ with $n\geq 6$, the products do not correspond to known partition identities.

\subsection{Staircases to the sum-sides}

A striking feature of the identities found using $Z$-algebras is that the sum-sides of such identities are inherently partition-theoretic. Generically, it appears to be a hard task to find ``nice'' analytic sum-sides that count the sum-side partitions. In a few cases, certain candidates for analytic sum-sides are known using Bailey techniques (see for instance \cite{Sills-nandi}), but it is far from obvious how these analytic sum-sides are related to the partition theoretic sum-sides.

For the identities presented in this article, we are able to provide analytic companions to our partition-theoretic sum-sides, by using staircases and jagged partitions. Given a partition $\pi: \lambda_1+\cdots+\lambda_j$ written in a weakly increasing order and a positive integer $s$, the (ordered) sequence $\mu: \lambda_1, \lambda_2-s,\lambda_3-2s,\cdots,  \lambda_{j}-(j-1)s$ is said to be obtained from $\pi$ by removing an $s$-staircase. As such, $\mu$ may not be in a weakly increasing order and
may also have non-positive entries. Such a 	sequence is called a jagged partition. 
For all the identities in this paper, removing an appropriate staircase leads to interesting jagged partitions whose generating functions could be written down explicitly. One then replaces the staircases to arrive at analytical sum-sides for the original identities.

\begin{alignat*}{3}
&\begin{matrix}
\begin{tikzpicture}
\foreach \x in {0,...,11}
\draw (\x*.5, 0) node {$\bullet$};
\foreach \x in {0,...,10}
\draw (\x*.5, -0.5) node {$\bullet$};
\foreach \x in {0,...,3}
\draw (\x*.5, -1) node {$\bullet$};
\foreach \x in {0,...,3}
\draw (\x*.5, -1.5) node {$\bullet$};
\foreach \x in {0,...,2}
\draw (\x*.5, -2) node {$\bullet$};
\foreach \x in {0,...,0}
\draw (\x*.5, -2.5) node {$\bullet$};
\draw[dashed, thick, OliveGreen] (-0.25,0.25) -- (-0.25,-0.25) -- (4.75,-0.25) -- (4.75,0.25) --  (-0.25,0.25);
\draw[dashed, thick,  OliveGreen] (-0.25,-0.25) -- (-0.25,-0.75) -- (3.75,-0.75) -- (3.75,-0.25);
\draw[dashed, thick,  OliveGreen] (-0.25,-0.75) -- (-0.25,-1.25) --  (2.75,-1.25) -- (2.75,-0.75);
\draw[dashed, thick, OliveGreen] (-0.25,-1.25) -- (-0.25,-1.75) -- (1.75,-1.75) -- (1.75,-1.25);
\draw[dashed, thick, OliveGreen] (-0.25,-1.75) -- (-0.25,-2.25) -- (0.75,-2.25) -- (0.75,-1.75);
\end{tikzpicture}
\end{matrix}
&& \qquad \rightsquigarrow {\text{Remove a 2-staircase}} \rightsquigarrow\qquad
&&
\begin{matrix}
\begin{tikzpicture}
\foreach \x in {0,...,1}
\draw (\x*.5, 0) node {$\bullet$};
\foreach \x in {0,...,2}
\draw (\x*.5, -0.5) node {$\bullet$};
\foreach \x in {-2,...,-1}
\draw (\x*.5, -1) node {$\times$};
\foreach \x in {0,...,0}
\draw (\x*.5, -2) node {$\bullet$};
\foreach \x in {0,...,0}
\draw (\x*.5, -2.5) node {$\bullet$};
\end{tikzpicture}
\end{matrix}\\
&\pi: 1 + 3 + 4 + 4 + 11 + 12
&&\qquad\rightsquigarrow {\text{Remove a 2-staircase}} \rightsquigarrow\qquad
&&\mu: 1, 1, 0, -2, 3, 2
\end{alignat*}

The identities in $A_9^{(2)}$ lead to analytic sum-sides which differ only in the linear term in the exponent of $q$ in each summand. Varying this linear term further leads us to six more conjectural identities. 
We then use this technique to provide analytic sum sides to certain previous conjectures, namely,
Identities $I_5$ and $I_6$ from  \cite{KR} and Identities $I_{5a}$ and $I_{6a}$ from \cite{R}.
Again, variations on the analytic sum-sides for \cite[$I_6$]{KR} lead us to three more conjectural identities.
One of these three identities has an asymmetric product-side; we present 
one further identity whose product-side has negatives of the residues from the aforementioned asymmetric product.
For every new (conjectural) identity presented in this paper, we are able to provide both the partition-theoretic and analytic sum-sides and prove that the analytic sum-sides are indeed generating functions of the partition-theoretic sum-sides. 
Lastly, we also discuss analytic sum-sides to the two Capparelli identities.

As mentioned above, jagged partitions play a crucial role in the present paper. They first arose in the physics literature
\cite{FJM1} in the analysis of fermionic characters for certain superconformal minimal models.
Moreover, in \cite{FJM2,FJM3} some beautiful identities for jagged partitions were established. 
In \cite{L}, Lovejoy established certain constant term identities related to generating functions for jagged partitions.
In \cite{ABM}, new analytic sum-sides for Schur's identity were found using staircases.
Very recently, in \cite{DL}, generalizations of the (first) Capparelli identity were found by utilizing jagged (over)partitions.
Our treatment of the first Capparelli identity is precisely a ``dilated'' version of the argument in \cite{DL}.
Notably, in \cite{Cap-higher}, Capparelli used staircases in his investigation of the identities related to the standard modules at levels $5$ and $7$ for the affine Lie algebra $A_2^{(2)}$.

\subsection{Verification} 
All the new conjectural identities in this paper have been verified up to the coefficient of $q^{500}$. 
Unlike \cite{KR} and \cite{R} where recursions based on partition-theoretic sum-sides were used for such a verification,
here we directly use the analytic sum-sides. 
Maple code for verification can be found appended in the plain-text format  at the end of the \texttt{.tex} file of this paper on arXiv.

\subsection{Future work and work in progress}

Proving the $q$-series identities in this paper is a work in progress. 

We expect a vertex-operator-theoretic interpretation of the identities 
arising from $A_9^{(2)}$ to be tedious. It is quite possible that such an investigation
may actually lead to completely different partition-theoretic sum-sides than the ones 
conjectured here.

Investigation of identities related to analogous level 2 standard modules for all $A_{\text{odd}}^{(2)}$ 
and higher level standard modules for $A_2^{(2)}$ would be extremely interesting; see \cite{Cap-higher}, \cite{MS} and \cite[Section 6.4]{Sills-book}. 
Nandi's identities (originating from level 4 standard modules for $A_2^{(2)}$) \cite{N} still remain quite difficult;
see \cite{Sills-nandi} for some recent results. These identities merit a closer study in the light of techniques presented
here.

We are working on experimentally finding more $q$-series identities of the kind presented here using jagged partitions
and staircases.

\subsection*{Acknowledgments} It is our pleasure to thank George E.\ Andrews, James Lepowsky, Mirko Primc, Andrew V.\ Sills and Doron Zeilberger
for their interest in our work and encouragement. We also thank Jeremy Lovejoy for a correspondence regarding \cite{DL}.

\section{Preliminaries}

We use the standard conventions regarding the $q$-Pochhammer symbols:
\begin{align}
(a;q)_n &= \prod_{1\leq t < n} (1-aq^t),\\
(a;q)_\infty &= \prod_{1\leq t } (1-aq^t),\\
(a_1,a_2,\dots,a_j;q)_m&=(a_1;q)_m(a_2;q)_m\cdots(a_j;q)_m.
\end{align}
For us, all identities presented in this paper are formal power series identities, and we shall expand expressions such as $\frac{1}{1-q}$ using geometric series.
We shall frequently use the following fundamental $q$-series identities due to Euler:
\begin{align}
\left(x;q\right)_\infty^{-1}&=
\sum_{n\geq 0}\dfrac{x^n}{(q;q)_n}\label{eqn:eulerp},\\
\left(-x;q\right)_\infty&=
\sum_{n\geq 0}\dfrac{x^nq^{n(n-1)/2}}{(q;q)_n}\label{eqn:eulerd}.
\end{align}

We write partitions of a positive integer in a \emph{weakly increasing} order. Sub-partitions of a partition $\pi$ will always refer
to \emph{contiguous} portions of $\pi$.

Suppose $\pi: p_1 + p_2 + \dots + p_n$ is a partition, where sometimes we may have to let $p_1=0$. Let $s$ be a positive integer. 
Let $\mu$ be the sequence $p_1, p_2-s,p_3-2s,\dots,p_n-(n-1)s$.
We say that $\mu$ is obtained by deleting an $s$-staircase from $\pi$. Note that $\mu$ may have non-positive entries and may not be in a weakly increasing order (which is the reason why we separate entries of $\mu$ by a comma rather than a plus sign), however,
successive differences in $\mu$ are at least $-s$.
We call such $\mu$ jagged partitions.

Suppose that the generating function of a set $\mathcal{P}$ of partitions is $\sum\limits_{m,n\geq 0}a_{m,n}x^mq^n$, i.e., there are
$a_{m,n}$ partitions of $n$ of length $m$ in $\mathcal{P}$. Then, 
the generating function of jagged partitions obtained by removing an $s$-staircase from each of the partitions of $\mathcal{P}$ 
is $\sum\limits_{m,n\geq 0}a_{m,n}x^mq^{-sm(m-1)/2}q^n$.

Jagged partitions emerging in this paper will have a very specific structure. We shall scan a jagged partition $\mu$ from left to right  and identify maximum jagged portions of $\mu$ corresponding to each of the positive integers $1,2,\dots$ (sometimes $0$ will have to be considered as well). If $\mu$ is obtained by removing an $s$-staircase, the block
corresponding to $j$ is defined as the maximal contiguous block of $\mu$
starting with $j$ and containing only integers from $\{j,j-1,\dots,j-s\}$; either of these conditions may be sometimes slightly relaxed but it will be clear what we mean.
We shall designate such blocks using regular expressions. 
If $\mathbf{P}$ is a pattern of integers (for example $\mathbf{P}=``2,1"$), then:
\begin{enumerate}
\item[$\mathbf{P}^*$] corresponds to a string of either $0$ or more contiguous blocks of $\mathbf{P}$.
\item[$\mathbf{P}^+$] will be a string of 
$1$ or more contiguous blocks of $\mathbf{P}$.
\item[$\mathbf{P}^\bullet$] will be either the empty string or $\mathbf{P}$ itself.
\end{enumerate}
For instance, if the jagged partition 
$2,1,2,1,3,3,4,3$ 
is obtained by removing a $1$-staircase from some partition $\pi$, its maximal block corresponding to $2$ matches $[2,1]^*$ (and also $[2,1]^+$ but not $[2,1]^\bullet$), the maximal block
corresponding to $3$ matches $3^*$  (and also $3^+$ but not
$3^\bullet$), the one for $4$ matches $[4,3]^\bullet$ etc.

We shall always denote a partition by $\pi$ and the jagged partition obtained by removing an $s$-staircase by $\mu$.

\section{Identities related to certain level $2$ modules for $A_9^{(2)}$}

\subsection{The symmetric conjectures}

We use the standard convention \cite{Kac} for designating the nodes in the Dynkin diagram of $A_9^{(2)}$,
 see Figure \ref{fig:A92}.
In our conjectures, the product sides are precisely the principally specialized characters of the vacuum spaces 
(with respect to the principal Heisenberg subalgebra) of certain level 2 standard modules as we specify below. These modules are contained in the tensor product of two inequivalent level $1$ modules \cite{BM-crystal}. 
See \cite{LW1,LW2,LW3,LW4} and \cite{B} for the relevant terminology and the product-sides.
\begin{figure}\label{fig:A92}
	\begin{tikzpicture}
	\draw [-] (1,1) -- (2.5,2);
	\draw [-] (1,3) -- (2.5,2);
	\draw [-] (2.5,2) -- (4,2);
	\path (4,2) -- node {\(==\Leftarrow =\)} (5.5,2);
	\draw [fill] (1,1) circle [radius=0.1];
	\draw [fill] (1,3) circle [radius=0.1];
	\draw [fill] (2.5,2) circle [radius=0.1];
	\draw [fill] (4,2) circle [radius=0.1];
	\draw [fill] (5.5,2) circle [radius=0.1];
	\node [below] at (1,0.9) {$\alpha_1$};
	\node [below] at (1,2.9) {$\alpha_0$};
	\node [below] at (2.5,1.9) {$\alpha_2$};
	\node [below] at (4,1.9) {$\alpha_3$};
	\node [below] at (5.5,1.9) {$\alpha_4$};
	\node [above] at (1,1.1) {$1$};
	\node [above] at (1,3.1) {$1$};
	\node [above] at (2.5,1.9+0.2) {$2$};
	\node [above] at (4,1.9+0.2) {$2$};
	\node [above] at (5.5,1.9+0.2) {$2$};
	\end{tikzpicture}
	\caption{Dynkin diagram of $A_9^{(2)}$. Labels $\alpha_\bullet$ enumerate the nodes. Numerical labels are coefficients for the canonical central element.}
\end{figure}
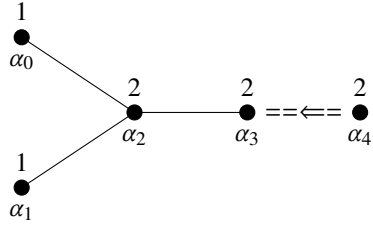
Ideas stemming from \cite{KNR, KR, N, R} suggest the following conjectures.
These conjectures have been checked up to the partitions of 
$N=500$.

The following sum-side conditions are common to the three ensuing conjectures:
\begin{enumerate}
	\item No consecutive parts allowed.
	\item Odd parts do not repeat.
	\item Even parts appear at most twice.
	\item If a part $2j$ appears twice then $2j\pm 3,2j\pm 2$
	(and an additional copy of $2j$, but this is subsumed in the third condition, also 
	$2j\pm 1$, but this is subsumed in the first condition) are forbidden to appear at all.
\end{enumerate}	

Equivalently:
\begin{enumerate}
	\item No consecutive parts allowed.
	\item Odd parts do not repeat.
	\item For a contiguous sub-partition $\lambda_i+\lambda_{i+1}+\lambda_{i+2}$, we have
	$|\lambda_i - \lambda_{i+2}| \geq 4$ if
	$\lambda_{i+1}$ is even and appears more than once.
\end{enumerate}	

\subsubsection{Identity 1:}

Arises from the module $L(\Lambda_0 + \Lambda_1)$ 
of $A_9^{(2)}$.

Product:  
$$\frac 1 {\left(q,q^4,q^6,q^8,q^{11};q^{12}\right)_\infty}.$$

Partition-theoretic sum-side: Above conditions, along with the initial condition that $2+2$ is not allowed as a sub-partition.

Example: There are ten partitions of each type for $n=12$.

\begin{minipage}[t]{3.7in}
	Product side: \\
	$1+11$ \\ 
	$4+8$ \\ 
	$1+1+1+1+8$ \\ 
	$6+6$ \\ 
	$1+1+4+6$ \\ 
	$1+1+1+1+1+1+6$ \\ 
	$4+4+4$ \\ 
	$1+1+1+1+4+4$ \\ 
	$1+1+1+1+1+1+1+1+4$ \\ 
	$1+1+1+1+1+1+1+1+1+1+1+1$
\end{minipage}
\hfill
\begin{minipage}[t]{2.3in}
Partition-theoretic sum-side: \\
	$12$ \\  
	$1+11$ \\ 
	$2+10$ \\ 
	$3+9$ \\ 
	$4+8$ \\ 
	$1+3+8$ \\ 
	$5+7$ \\ 
	$1+4+7$ \\ 
	$6+6$ \\ 
	$2+4+6$
\end{minipage}

\subsubsection{Identity 2:}

Arises from the module $L(\Lambda_3)$ 
of $A_9^{(2)}$.

Product:  
\vspace{-.2in}

$$\frac {\left(q^6;q^{12}\right)_\infty}{\left(q^2,q^3,q^4,q^8,q^9,q^{10};q^{12}\right)_\infty}
= \frac {\left(-q^3;q^6\right)_\infty\left(q^6;q^6\right)_\infty}{\left(q^2;q^2\right)_\infty}  $$

Partition-theoretic sum-side: Above conditions, along with the initial condition that $1$ is forbidden to appear.

\subsubsection{Identity 3:}

Arises from the module $L(\Lambda_5)$ 
of $A_9^{(2)}$.

Product:  
$$\frac 1 {\left(q^4,q^5,q^6,q^7,q^8;q^{12}\right)_\infty}.$$

Partition-theoretic sum-side: Above conditions, along with the initial condition that $1$, $2$, and $3$ are all forbidden as parts.

\subsection{Analytic sum-sides for the symmetric conjectures}

\subsubsection{Identity 1}

Let $\pi$ be a partition counted in the sum-side and remove a $2$-staircase to obtain a jagged partition $\mu$.

Looking at the restrictions on $\pi$ it is clear the corresponding restrictions on $\mu$ amount to forbidding the appearance of the following blocks.
\begin{enumerate}
\item $j, j-1$. \label{it:id1noconsec}
\item $j, j-2$ if $j$ is odd\label{it:id1nooddrepeat}. 
\item $j, j-2, j-4$ if $j$ is even.
\item $j, j-2, j-2$ if $j$ is even.
\item $j, j, j-2$ if $j$ is even.
\item $j, j+1, j-1$ if $j$ is odd.
\item $2,0$ is not allowed to appear in $\mu$.
\end{enumerate}

This implies that if $j$ is odd, the maximal block corresponding to $j$ in $\mu$ is of the form $j^*$.
Similarly, if $j\neq 2$ is even, the maximal block corresponding to $j$ in $\mu$ is of the form $j^*,[j-2, j]^*$. 
Also, no block of the shape $j-1,j+1,j$ appears for $j$ odd.
It is now straightforward to obtain that the $(x,q)$-generating function for such $\mu$ is
given by:
\begin{align}
&\prod_{i\, \mathrm{odd}}\dfrac{1}{1-xq^i}
\prod_{j\,\mathrm{odd}}(1-xq^{j-1}\cdot xq^{j+1}\cdot xq^{j})
\prod_{k\,\mathrm{even}, k\geq 4}\dfrac{1}{1-xq^k\cdot xq^{k-2}}
\prod_{l\,\mathrm{even}}\dfrac{1}{1-xq^l}\nonumber\\
&
=\left(xq;q\right)^{-1}_\infty\left(x^2q^6;q^4\right)^{-1}_\infty\left(x^3q^9;q^6\right)_\infty\nonumber\\
&= \left(\sum_{i\ge 0} \frac{x^i q^i}{\left(q;q\right)_i}\right)
\left(\sum_{j\ge 0} \frac{x^{2j}q^{6j}}{\left(q^4;q^4\right)_j}\right)
\left(\sum_{k \ge 0} \frac{\left(-1\right)^kx^{3k}q^{3k^2+6k}} {\left(q^6;q^6\right)_k}\right) \nonumber\\
&= \sum_{i,j,k\ge 0} \left(-1\right)^k \frac{x^{i+2j+3k} q^{i+6j+3k^2+6k}}{\left(q;q\right)_i\left(q^4;q^4\right)_j\left(q^6;q^6\right)_k}.
\end{align}
Now we reinstate the $2$-staircase, i.e., we let $x^m\mapsto x^mq^{m(m-1)}$ to get that the required analytic sum side is:
\begin{align}
 \sum_{i,j,k\ge 0} \left(-1\right)^k \frac{x^{i+2j+3k} q^{(i+2j+3k)(i+2j+3k-1) + i+6j+3k^2+6k}}{\left(q;q\right)_i\left(q^4;q^4\right)_j\left(q^6;q^6\right)_k}.
\end{align}

One now takes $x\mapsto 1$ to deduce the conjectures.
We shall omit this last step in the identities below.
\subsubsection{Identity 2} For this identity, only the initial conditions change. We get that $\mu$ must avoid blocks corresponding to $j=1$.
We have that the generating function for $\mu$ is:
\begin{align}
&\prod_{i\, \mathrm{odd}, i\geq 3}\dfrac{1}{1-xq^i}
\prod_{j\,\mathrm{odd}, j\geq 3}(1-xq^j\cdot xq^{j+1}\cdot xq^{j-1})
\prod_{k\,\mathrm{even} }\dfrac{1}{1-xq^k\cdot xq^{k-2}}
\prod_{l\,\mathrm{even}}\dfrac{1}{1-xq^l}\nonumber\\
&=\left(xq^2;q\right)^{-1}_\infty\left(x^3q^9;q^6\right)_\infty \left(x^2q^2;q^4\right)^{-1}_\infty\nonumber\\
&= \left(\sum_{i\ge 0} \frac{x^i q^{2i}}{\left(q;q\right)_i} \right)
\left(\sum_{k \ge 0} \frac{\left(-1\right)^kx^{3k}q^{3k^2+6k}}{\left(q^6;q^6\right)_k}\right) \left(\sum_{j\ge 0} \frac{x^{2j}q^{2j}}{\left(q^4;q^4\right)_j}\right) \nonumber\\
&= \sum_{i,j,k\ge 0} \frac{\left(-1\right)^k x^{i+2j+3k} q^{2i+2j+3k^2+6k}}{\left(q;q\right)_i\left(q^4;q^4\right)_j\left(q^6;q^6\right)_k}. 
\end{align}
Reinstating the $2$-staircase, we obtain:
\begin{align}
\sum_{i,j,k\ge 0} \frac{\left(-1\right)^k x^{i+2j+3k} q^{(i+2j+3k)(i+2j+3k-1)+2i+2j+3k^2+6k}}{\left(q;q\right)_i\left(q^4;q^4\right)_j\left(q^6;q^6\right)_k}. 
\end{align}

\subsubsection{Identity 3} 
For this identity, we omit the blocks corresponding to parts $1$, $2$ and $3$ from the generating function for $\mu$:
\begin{align}
&\prod_{i\, \mathrm{odd}, i\geq 5}\dfrac{1}{1-xq^i}
\prod_{j\,\mathrm{odd}, j\geq 5}(1-xq^j\cdot xq^{j+1}\cdot xq^{j-1})
\prod_{k\,\mathrm{even}, k\geq 4 }\dfrac{1}{1-xq^k\cdot xq^{k-2}}
\prod_{l\,\mathrm{even}, k\geq 4}\dfrac{1}{1-xq^l}\nonumber\\
&=\left(xq^4;q\right)^{-1}_\infty\left(x^3q^{15};q^6\right)_\infty\left(x^2q^6;q^4\right)^{-1}_\infty\nonumber\\
&=\left(\sum_{i\ge 0} \frac{x^i q^{4i}}{\left(q;q\right)_i}\right)\left(\sum_{k \ge 0} \frac{\left(-1\right)^kx^{3k}q^{3k^2+12k}}{\left(q^6;q^6\right)_k}\right)\left(\sum_{j\ge 0} \frac{x^{2j}q^{6j}}{\left(q^4;q^4\right)_j}\right) \nonumber\\
&=\sum_{i,j,k\ge 0} \frac{\left(-1\right)^k x^{i+2j+3k} q^{4i+6j+3k^2+12k}}{\left(q;q\right)_i\left(q^4;q^4\right)_j\left(q^6;q^6\right)_k}.
\end{align}
Reinstating the $2$-staircase, we obtain:
\begin{align}
\sum_{i,j,k\ge 0} \frac{\left(-1\right)^k x^{i+2j+3k} q^{(i+2j+3k)(i+2j+3k-1)+4i+6j+3k^2+12k}}{\left(q;q\right)_i\left(q^4;q^4\right)_j\left(q^6;q^6\right)_k}.
\end{align}

\subsection{An intriguing relation}

We now deduce a relation that holds among the symmetric $A_9^{(2)}$ conjectures.
Let us denote the sum-side in the identity $i=1,2,3$ by $S_i(x,q)$ where $x$ counts number of parts
and $q$ corresponds to the number being partitioned.

\begin{thm}\label{thm:intrigueSum}
	We have that:
	\begin{align}
	S_1(x,q)=S_2(x,q)+xqS_3(x,q).
	\end{align}
\end{thm}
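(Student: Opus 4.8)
The plan is to prove the identity by comparing the explicit triple-sum formulas for $S_1$, $S_2$, and $S_3$ that were derived in the previous subsection. From the computation of Identity $i$, each $S_i(x,q)$ is a sum over $i,j,k\ge 0$ (I will rename the summation indices to $a,b,c$ to avoid collision) of the form
\begin{align*}
S_1 &= \sum_{a,b,c\ge 0} \frac{(-1)^c\, x^{a+2b+3c}\, q^{Q+a+6b+3c^2+6c}}{(q;q)_a(q^4;q^4)_b(q^6;q^6)_c},\\
S_2 &= \sum_{a,b,c\ge 0} \frac{(-1)^c\, x^{a+2b+3c}\, q^{Q+2a+2b+3c^2+6c}}{(q;q)_a(q^4;q^4)_b(q^6;q^6)_c},\\
S_3 &= \sum_{a,b,c\ge 0} \frac{(-1)^c\, x^{a+2b+3c}\, q^{Q+4a+6b+3c^2+12c}}{(q;q)_a(q^4;q^4)_b(q^6;q^6)_c},
\end{align*}
where $Q = (a+2b+3c)(a+2b+3c-1)$ is the common staircase contribution. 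Since the denominators and the factor $(-1)^c$ agree across all three sums, the whole identity reduces to a term-by-term matching at the level of monomials $x^{a+2b+3c}q^{(\cdots)}$.

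First I would establish the identity at the ungraded (pre-staircase) level, since the staircase substitution $x^m \mapsto x^m q^{m(m-1)}$ is applied uniformly and hence commutes with any linear relation in the $S_i$; this lets me drop the awkward $Q$ term and work with the cleaner exponents $a+6b+3c^2+6c$, $2a+2b+3c^2+6c$, and $4a+6b+3c^2+12c$. The strategy is then to exhibit an index-preserving bijection on the summation lattice. Observe that multiplying $S_3$ by $xq$ shifts its total degree in $x$ by one and its $q$-exponent by one, turning the monomial exponent in $x$ from $a+2b+3c$ into $a+2b+3c+1$. So I need to show that for each fixed value of the $x$-exponent, the $q$-coefficient from $S_1$ equals the sum of the corresponding coefficients from $S_2$ and from $xq\,S_3$.

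The most natural route is to split the sum defining $S_1$ according to the parity (or more precisely the size) of the first index $a$, or to apply a telescoping/recurrence obtained by isolating the $a=0$ term. Concretely, I would try to show that the generating-function factorizations themselves satisfy the relation: writing $F_i(x,q)$ for the product forms (for instance $S_1 \leftrightarrow (xq;q)_\infty^{-1}(x^2q^6;q^4)_\infty^{-1}(x^3q^9;q^6)_\infty$ before reinstating the staircase), I would look for a simple algebraic relation among these three products. Comparing $F_1$, $F_2$, $F_3$, the $(x^3q^{\cdot};q^6)_\infty$ and $(x^2q^{\cdot};q^4)_\infty^{-1}$ factors differ only by explicit shifts, and the $(xq^{\cdot};q)_\infty^{-1}$ factors differ by a finite number of Euler factors; peeling off these finitely many factors should reduce the claimed relation to a short verifiable $q$-Pochhammer identity. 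I expect the cleanest argument is purely at the level of the triple sums: extract the coefficient of a fixed power $x^n$ in each $S_i$ and verify the resulting finite-in-$x$, formal-in-$q$ identity directly.

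The main obstacle I anticipate is the bookkeeping caused by the three different linear terms in the $q$-exponents ($a$ vs.\ $2a+2b$-type vs.\ $4a+6b$-type together with the $6c$ vs.\ $12c$ discrepancy in the $c$-dependent part). These do not line up under the naive shift by $xq$, so a single monomial bijection will not suffice; instead one must reorganize the $S_3$ contribution so that its $3c^2+12c$ becomes compatible with the $3c^2+6c$ appearing in $S_1$ and $S_2$, which amounts to a reindexing $c \mapsto c$ combined with absorbing the extra $q^{6c}$ into the shift. Verifying that these rearrangements are consistent across all three indices simultaneously — i.e.\ that the $xq S_3$ terms exactly account for the difference $S_1 - S_2$ and nothing is double-counted — is the delicate step. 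I would handle it by reducing to the $x$-coefficient identity and checking it against the product forms, where the finitely many extra Euler factors make the discrepancy transparent.
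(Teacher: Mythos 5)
Your starting formulas for $S_1,S_2,S_3$ are the correct ones from the paper, but the central reduction of your argument fails. The staircase substitution $x^m\mapsto x^mq^{m(m-1)}$ is linear over power series in $q$, yet it does \emph{not} commute with multiplication by $xq$: multiplying by $xq$ shifts the $x$-grading, and the substitution multiplies each graded piece by a \emph{different} power of $q$. Concretely, write $\Sigma$ for the staircase operator and $T_i$ for the pre-staircase (jagged) generating functions, so $S_i=\Sigma T_i$ with
\begin{align*}
T_1&=\left(xq;q\right)_\infty^{-1}\left(x^2q^6;q^4\right)_\infty^{-1}\left(x^3q^9;q^6\right)_\infty,\\
T_2&=\left(xq^2;q\right)_\infty^{-1}\left(x^2q^2;q^4\right)_\infty^{-1}\left(x^3q^9;q^6\right)_\infty,\\
T_3&=\left(xq^4;q\right)_\infty^{-1}\left(x^2q^6;q^4\right)_\infty^{-1}\left(x^3q^{15};q^6\right)_\infty.
\end{align*}
If $T_3=\sum_m d_m(q)x^m$, then the $x^{m+1}$-coefficient of $xq\,S_3=xq\,\Sigma T_3$ is $d_mq^{m(m-1)+1}=q^{(m+1)m}\cdot d_mq^{1-2m}$, which shows $xq\,\Sigma T_3=\Sigma\bigl[xq\,T_3(xq^{-2},q)\bigr]$, \emph{not} $\Sigma\bigl[xq\,T_3(x,q)\bigr]$. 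Consequently the pre-staircase identity you propose to verify, $T_1=T_2+xq\,T_3(x,q)$, is false: it would force $T_2=T_3$, and already the coefficients of $x^1$ differ ($q^2/(1-q)$ versus $q^4/(1-q)$). You do sense this obstruction in your final paragraph (``these do not line up under the naive shift by $xq$''), but ``absorbing the extra $q^{6c}$ into the shift'' is never made precise, and that is exactly where the proof has to live.

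The gap is repairable, and the repaired argument is clean: the claim $S_1=S_2+xqS_3$ is equivalent to $T_1(x,q)=T_2(x,q)+xq\,T_3(xq^{-2},q)$; a direct substitution in the product form gives $T_3(xq^{-2},q)=T_2(x,q)$, so everything reduces to the one-line identity
\begin{equation*}
\frac{T_1}{T_2}=\frac{\left(xq^2;q\right)_\infty\left(x^2q^2;q^4\right)_\infty}{\left(xq;q\right)_\infty\left(x^2q^6;q^4\right)_\infty}=\frac{1-x^2q^2}{1-xq}=1+xq.
\end{equation*}
Note also that this (corrected) analytic route is genuinely different from the paper's proof, which never touches the analytic forms at all: the paper gives a combinatorial map on the partition-theoretic sum-sides, splitting a partition counted by $S_1$ according to the parity of the length of its maximal initial run $1+3+5+\cdots$ of consecutive odd parts and replacing adjacent pairs in that run by their averages (deleting the leading $1$ in the odd case), landing in $S_2$ with the same weight and length, or in $S_3$ with weight and length each reduced by one. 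The combinatorial proof explains \emph{why} the relation holds at the level of partitions; the analytic one, once the staircase conjugation is handled correctly, is shorter but opaque.
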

\begin{proof}
	Recall the initial conditions: 
	\begin{itemize}
		\item[$S_1$:] $2+2$ is forbidden,
		\item[$S_2$:] $1$ is forbidden,
		\item[$S_3$:] $1,2,3$ are forbidden.
	\end{itemize}
	
	Let $\lambda: \lambda_1+\lambda_2+\dots+\lambda_k$ be a partition
	of $n$ counted in $S_1$ (recall that we have been using a \emph{weakly increasing} order)
	and consider the maximal (possibly empty) contiguous string of odds starting with $1$ contained in $\lambda$.
	Let us denote this string by $\sigma: \lambda_1=1 + \dots +\lambda_s$
	(note that $\sigma$ may be an empty string, in which case we let $s=0$).
	Also note that parts in $\sigma$ are strictly increasing, since odds are not allowed to repeat.
	
	Now, depending on the parity of $s$, we transform the $\sigma$ substring of $\lambda$ (keeping the rest of $\lambda$ unchanged) to obtain new partitions.
	\begin{enumerate}
		\item[$s$ even:] Replace every pair $\lambda_{2i-1} + \lambda_{2i}$ of adjacent parts appearing in $\sigma$ by their average, i.e., 
		$\frac{(\lambda_{2i-1} + \lambda_{2i})}{2} + \frac{(\lambda_{2i-1} + \lambda_{2i})}{2}$.
		
		Example: $1+3+5+7 \rightsquigarrow 2+2 + 6+6$.
		
		It is easy to check that the new partition thus obtained
		has the same length and weight as $\lambda$ and
		is counted in $S_2$.
		
		\item[$s$ odd:] Replace every pair $\lambda_{2i} + \lambda_{2i+1}$ of adjacent parts appearing in $\sigma$ by their average, i.e., 
		$\frac{(\lambda_{2i} + \lambda_{2i+1})}{2} + \frac{(\lambda_{2i} + \lambda_{2i+1})}{2}$
		and then delete the initial $1$.
		
		Example: $1+3+5+7+9 \rightsquigarrow 4+4 + 8+8$.
		
		It is clear that the new partition obtained has weight and length one lower than that of $\lambda$ and is counted in $S_3$.
	\end{enumerate}
\end{proof}
The product-side analogue of Theorem \ref{thm:intrigueSum} (with $x\mapsto 1$) 
can be found in \cite{CH}:
\begin{thm}\label{thm:intrigueProd}
	$$\frac 1 {\left(q,q^4,q^6,q^8,q^{11};q^{12}\right)_\infty}
	=
	\frac {\left(q^6;q^{12}\right)_\infty}{\left(q^2,q^3,q^4,q^8,q^9,q^{10};q^{12}\right)_\infty}
	+ q\cdot
	\frac 1 {\left(q^4,q^5,q^6,q^7,q^8;q^{12}\right)_\infty}.$$
\end{thm}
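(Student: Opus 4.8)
The plan is to treat this as an identity of infinite products in its own right. Although it is advertised as the ``product analogue'' of the bijective Theorem~\ref{thm:intrigueSum}, that bijection only delivers $S_1=S_2+xqS_3$ for the \emph{sum-sides}, and the equality of each sum-side with the corresponding product is exactly the still-conjectural statement we are not allowed to invoke. So the product identity has to be proved by direct $q$-series manipulation, and once proved it serves as independent confirmation that the three conjectures are mutually consistent.

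First I would clear all denominators simultaneously. The residue classes modulo $12$ appearing in the three denominators are $\{1,4,6,8,11\}$, $\{2,3,4,8,9,10\}$ and $\{4,5,6,7,8\}$, whose union is all of $\{1,2,\dots,11\}$; hence multiplying the whole identity by $D:=(q,q^2,\dots,q^{11};q^{12})_\infty=(q;q)_\infty/(q^{12};q^{12})_\infty$ converts every term into a finite Pochhammer product. Writing $\Theta_a:=(q^a,q^{12-a},q^{12};q^{12})_\infty$, a short bookkeeping of which classes survive (in the middle term the extra numerator factor $(q^6;q^{12})_\infty$ supplies the second copy of $q^6$ needed to build $\Theta_6$) gives
\begin{align*}
D\cdot P_1 &= \Theta_2\Theta_3\Theta_5/(q^{12};q^{12})_\infty^{3},\\
D\cdot P_2 &= \Theta_1\Theta_5\Theta_6/(q^{12};q^{12})_\infty^{3},\\
D\cdot qP_3 &= q\,\Theta_1\Theta_2\Theta_3/(q^{12};q^{12})_\infty^{3},
\end{align*}
where $P_1,P_2,P_3$ denote the three products in the statement. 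Cancelling $(q^{12};q^{12})_\infty^{-3}$, the entire theorem collapses to the single three-term theta relation
\[
\Theta_2\Theta_3\Theta_5=\Theta_1\Theta_5\Theta_6+q\,\Theta_1\Theta_2\Theta_3.
\]

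By the Jacobi triple product each factor is a theta series, $\Theta_a=\sum_{n\in\ZZ}(-1)^n q^{6n^2+(a-6)n}=\theta(q^{a-6})$, where $\theta(z)=\sum_n(-1)^n z^n Q^{n^2}$ has nome $Q=q^6$. To prove the relation I would rewrite it as $\Theta_2\Theta_3\,(\Theta_5-q\,\Theta_1)=\Theta_1\Theta_5\Theta_6$ and apply the standard dissection method: expand a product of two of these thetas as a double sum, diagonalize the quadratic form via $s=m+n$, $d=m-n$ (so that $6(m^2+n^2)=3(s^2+d^2)$), and split the summation according to the residues of $s$ and $d$, matching the resulting pieces against the factors on the other side. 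Equivalently, one recognizes the relation as an instance of a Weierstrass-type three-term theta identity.

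The main obstacle is precisely this last step: carrying the signs and the (apparently half-integral) linear exponents through the residue split and checking that the pieces reassemble \emph{exactly} into the claimed products is routine but error-prone. A cleaner rigorous finish, which I would actually prefer, is to note that each $\Theta_a$ is an eta/theta constant of weight $1/2$, so all three terms are weight-$3/2$ forms on a suitable $\Gamma_0(N)$ with character; once one checks that their multiplier systems agree (the $q$ in front behaves like an eta-power shift), the identity is a linear relation in a finite-dimensional space, and a Sturm bound reduces it to checking finitely many coefficients — a bound that for this small level lies well below $q^{500}$, so the paper's verification itself becomes a proof. As a low-tech safeguard I have confirmed the reduced relation by hand through $q^{10}$, and I would finally cross-check the normalization against the form recorded in \cite{CH}.
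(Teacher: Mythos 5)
Your route is genuinely different from the paper's: the paper's entire proof is one line, citing Equation (12) of \cite{CH}, whereas you reduce the theorem to a classical theta relation and propose to prove that directly. Your reduction is correct: multiplying by $D=(q,q^2,\dots,q^{11};q^{12})_\infty$ (invertible as a formal power series) does turn the three terms into $\Theta_2\Theta_3\Theta_5$, $\Theta_1\Theta_5\Theta_6$ and $q\,\Theta_1\Theta_2\Theta_3$, each divided by $(q^{12};q^{12})_\infty^{3}$, so the theorem is equivalent to $\Theta_2\Theta_3\Theta_5=\Theta_1\Theta_5\Theta_6+q\,\Theta_1\Theta_2\Theta_3$. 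Your instinct that this is an instance of the Weierstrass three-term identity is also right, and making it explicit is the cleanest way to close the step you left open. Set $\theta(z)=(z,q^{12}/z;q^{12})_\infty$, so $\Theta_a=(q^{12};q^{12})_\infty\,\theta(q^{a})$, and recall $\theta(q^{12}/z)=\theta(z)$ and $\theta(z^{-1})=-z^{-1}\theta(z)$. Multiplying the reduced relation by the spare factor $\theta(q^{4})$ turns it into
\begin{equation*}
\theta(q^{2})\theta(q^{3})\theta(q^{4})\theta(q^{5})-\theta(q)\theta(q^{4})\theta(q^{5})\theta(q^{6})
=q\,\theta(q)\theta(q^{2})\theta(q^{3})\theta(q^{4}),
\end{equation*}
which is precisely the standard Weierstrass relation
$\theta(xz)\theta(x/z)\theta(yw)\theta(y/w)-\theta(xw)\theta(x/w)\theta(yz)\theta(y/z)=(y/z)\,\theta(xy)\theta(x/y)\theta(zw)\theta(z/w)$
evaluated at $(x,y,z,w)=(q^{5/2},q^{13/2},q^{-1/2},q^{-3/2})$: indeed $\theta(y/w)=\theta(q^{8})=\theta(q^{4})$, $\theta(y/z)=\theta(q^{7})=\theta(q^{5})$, $\theta(xy)=\theta(q^{9})=\theta(q^{3})$, $\theta(x/y)=-q^{-4}\theta(q^{4})$, $\theta(zw)=-q^{-2}\theta(q^{2})$, and the prefactor $y/z=q^{7}$ combines with $-q^{-4}$ and $-q^{-2}$ to give exactly $+q$. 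Dividing back by $\theta(q^{4})$ finishes the proof.

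The caveat is that, as written, your text is a plan rather than a proof: the dissection is skipped as ``error-prone,'' the Weierstrass instance is asserted without the specialization (which is the entire content here), and hand-checking through $q^{10}$ proves nothing. The Sturm-bound alternative is viable but not as free as you suggest: after removing the common factor $q^{-13/12}$, the three terms are weight-$3/2$ theta products with nontrivial multiplier systems, and showing that they are modular on a common group \emph{with the same multiplier} is exactly the technical content one would have to supply (via the transformation laws for theta constants or generalized eta quotients). Once that is done, the valence bound on, say, $\Gamma(24)$ requires agreement only to roughly $q^{50}$ in the original variable, so the paper's verification to $q^{500}$ would indeed over-satisfy it -- but the bound and the multiplier check must actually be carried out. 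Either finish works; the Weierstrass specialization above is the shortest, and it is essentially how identities like Equation (12) of \cite{CH} are proved in the first place.
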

\begin{proof}
	Follows  from Equation (12) of \cite{CH}.
\end{proof}

\section{Asymmetric companions of the $A_9^{(2)}$ conjectures}

Observe that the analytic sum-sides presented above differ only in the linear terms in the exponents of $q$. A computer search for other possible linear terms reveals further conjectures.

\subsection{Analytic forms}
\begin{align}
\sum_{i,j,k\ge 0}  \frac{\left(-1\right)^kq^{i^2+4ij+6ik+4j^2+12jk+12k^2+j}}{\left(q;q\right)_i\left(q^4;q^4\right)_j\left(q^6;q^6\right)_k} &= \frac 1 {\left(q,q^4,q^5,q^9,q^{11};q^{12}\right)_\infty}  \label{conj:new1} \\
\sum_{i,j,k\ge 0}  \frac{\left(-1\right)^kq^{i^2+4ij+6ik+4j^2+12jk+12k^2+
		i-3j}}{\left(q;q\right)_i\left(q^4;q^4\right)_j\left(q^6;q^6\right)_k} &= 
\frac 1 {\left(q,q^5,q^7,q^8,q^{9};q^{12}\right)_\infty}  \label{conj:new1a} \\
\sum_{i,j,k\ge 0} \frac{\left(-1\right)^k q^{i^2+4ij+6ik+4j^2+12jk+12k^2-2j-3k}}{\left(q;q\right)_i\left(q^4;q^4\right)_j\left(q^6;q^6\right)_k} & = 
\frac {\left(q^3;q^{12}\right)_\infty}{\left(q,q^2,q^5,q^6,q^{9},q^{10};q^{12}\right)_\infty} \label{conj:new2} \\
\sum_{i,j,k\ge 0} \frac{\left(-1\right)^k q^{i^2+4ij+6ik+4j^2+12jk+12k^2+i+2j+3k}}{\left(q;q\right)_i\left(q^4;q^4\right)_j\left(q^6;q^6\right)_k} & = \frac {\left(q^9;q^{12}\right)_\infty}{\left(q^2,q^3,q^6,q^7,q^{10},q^{11};q^{12}\right)_\infty} \label{conj:new2a} \\
\sum_{i,j,k\ge 0} \frac{\left(-1\right)^k q^{i^2+4ij+6ik+4j^2+12jk+12k^2
		-j}}{\left(q;q\right)_i\left(q^4;q^4\right)_j\left(q^6;q^6\right)_k} & = 
\frac 1 {\left(q,q^3,q^7,q^8,q^{11};q^{12}\right)_\infty}\label{conj:new3} \\
\sum_{i,j,k\ge 0} \frac{\left(-1\right)^k q^{i^2+4ij+6ik+4j^2+12jk+12k^2+2i+3j+6k}}{\left(q;q\right)_i\left(q^4;q^4\right)_j\left(q^6;q^6\right)_k} & = \frac 1 {\left(q^3,q^4,q^5,q^7,q^{11};q^{12}\right)_\infty}\label{conj:new3a}
\end{align}
Note the way in which the product sides come in pairs: \eqref{conj:new1} and \eqref{conj:new3}, 
\eqref{conj:new1a} and \eqref{conj:new3a}, and
\eqref{conj:new2} and \eqref{conj:new2a}. (The allowable congruence classes for \eqref{conj:new1} are $1,4,5,9,11 \pmod{12}$, while the allowable congruence classes for \eqref{conj:new3} are $-1,-4,-5,-9,-11 \pmod{12}$.)

\subsection{Partition-theoretic sum-sides}
\subsubsection{Identities 4 and 4a: \eqref{conj:new1} and \eqref{conj:new1a} }
The sum side of Conjecture \eqref{conj:new1} has the following difference conditions:
\begin{enumerate}
	\item No part repeats.
	\item Adjacent parts do not differ by 1 if the larger part is even.
	\item $(2j) + (2j+1) + (2j+3)$ forbidden as a sub-partition.
	\item $(2j) + (2j+2) + (2j+3)$ forbidden as a sub-partition.
	\item $(2j) + (2j+2) + (2j+4)$ forbidden as a sub-partition.
\end{enumerate}
The difference conditions for Conjecture \eqref{conj:new1a} are the same, except for an additional initial condition (hence our 4/4a terminology):
\begin{enumerate}[resume]
	\item None of $1+3$, $2+3$, $2+4$ can appear as sub-partitions. Alternately, assume that the partition starts with a fictitious $0$, and then these initial conditions are implied by the remaining difference conditions.
\end{enumerate}
We prove that these are the correct partition-theoretic sum-sides by using staircases.
Let $\pi$ be a partition counted by the sum-side for Identity 4.
Delete a $2$-staircase to obtain a jagged partition $\mu$.
It is clear that the conditions on $\pi$ amount to forbidding the following bocks in $\mu$.
\begin{enumerate}
\item $j,j-2$
\item $j,j-1$ if $j$ is odd.
\item $j,j-1,j-1$ if $j$ is even.
\item $j,j,j-1$ if $j$ is even.
\item $j,j,j$ if $j$ is even.
\end{enumerate}

Therefore, the maximal block in $\mu$ corresponding to an even value of $j$ is $[j, j-1,]^*j^\bullet, j^\bullet $ whereas the maximal block in $\mu$ corresponding to an odd $j$ is $j^*$.
We get that the generating function for $\mu$ is:
\begin{align}
&\prod_{j\,\mathrm{even}}\dfrac{1}{1-xq^j\cdot xq^{j-1}}
\prod_{k\,\mathrm{even}}(1+xq^k + xq^k\cdot xq^k)
\prod_{l\,\mathrm{odd}}\dfrac{1}{1-xq^l}\\
&=\left(xq;q\right)_{\infty}^{-1}\left(x^2q^3;q^4\right)_\infty^{-1}\left(x^3q^6;q^6\right)_\infty
 \nonumber \\
 &=\left(\sum_{i\geq 0}\dfrac{x^iq^{i}}{(q;q)_i}\right)
 \left(\sum_{j\geq 0}\dfrac{x^{2j}q^{3j}}{(q^4;q^4)_j}\right)
  \left(\sum_{k\geq 0}(-1)^k\dfrac{x^{3k}q^{3k^2+3k}}{(q^6;q^6)_k}\right)\nonumber\\
 &=\sum_{i,j,k\geq 0}(-1)^k\dfrac{x^{i+2j+3k}q^{i+3j+3k^2+3k}}{(q;q)_i(q^4;q^4)_j(q^6;q^6)_k}. 
\end{align}
Letting $x^m\mapsto x^mq^{m(m-1)}$ we get:
\begin{align}
\sum_{i,j,k\geq 0}(-1)^k\dfrac{x^{i+2j+3k}q^{
	(i+2j+3k)(i+2j+3k-1) + i + 3j+ 3k^2 + 3k}
	}{(q;q)_i(q^4;q^4)_j(q^6;q^6)_k}. 
\end{align}

The analysis for Identity $4a$ is perhaps the most delicate of all identities considered in this article.
Let $\pi$ be a partition counted in the sum-side of Identity $4a$
and let $\tilde{\mu}$ be obtained by removing a $2$-staircase from $0+\pi$.
 We make two cases and arrive at a jagged partition $\mu$ accordingly:
\begin{enumerate}[label={\alph*.}]
	\item If every occurrence of $0$ in $\tilde{\mu}$ is immediately succeeded  by $-1$ and no occurrence of $-1$ is immediately succeeded by $1$, we let $\mu=\tilde{\mu}$.
	\item In all other cases, we let $\mu$ to be obtained by simply deleting a $2$-staircase from $\pi$.
\end{enumerate}

To explain the cases, we consider the following five examples:
\begin{align}
\pi= 1+6+7 \rightsquigarrow 0 + \pi = 0 + 1+6+7
\rightsquigarrow \tilde{\mu} = 0 , -1,2,1 \rightsquigarrow^{\text{case\,\,a}}
\mu&=0,-1,2,1.
\\
\pi= 1+5+8 \rightsquigarrow 0 + \pi = 0 + 1+5+8
\rightsquigarrow \tilde{\mu} = 0 , -1,1,2 \rightsquigarrow^{\text{case\,\,b}}
\mu&=1,3,4.
\\
\pi= 1+4+6 \rightsquigarrow 0 + \pi = 0 + 1+4+6
\rightsquigarrow \tilde{\mu} = 0 , -1,0,0 \rightsquigarrow^{\text{case\,\,b}}
\mu &= 1,2,2.\\
\pi= 2+6+7 \rightsquigarrow 0 + \pi = 0 + 2+6+7
\rightsquigarrow \tilde{\mu} = 0 , 0,2,1 \rightsquigarrow^{\text{case\,\,b}}
\mu&=2,4,3.
\\
\pi= 3+5+7 \rightsquigarrow 0 + \pi = 0 + 3+5+7
\rightsquigarrow \tilde{\mu} = 0 , 1,1,1 \rightsquigarrow^{\text{case\,\,b}}
\mu&=3,3,3.
\end{align}

Now, it turns out that the $\mu$ corresponding to case a have the following form. The maximal block corresponding to $0$ has the form $[0,-1]^+$, 
the maximal block corresponding to an even number $j\geq 2$ is
$[j,j-1]^*,j^\bullet,j^\bullet$ and for an odd $j\geq 3$ it is $j^*$. The generating function for such $\mu$ is:
\begin{align*}
\dfrac{xq^0\cdot xq^{-1}}{1-xq^0\cdot xq^{-1}}
\left(\prod\limits_{j\,\,\text{even}, j\geq 2}
\dfrac{1}{1-xq^j\cdot xq^{j-1}}
(1+xq^j + xq^j\cdot xq^j)
\right)
\left(\prod\limits_{k\,\,\text{odd}, j\geq 3}
\dfrac{1}{1-xq^k}
\right)\\
=x^2q^{-1}\left(x^2q^{-1};q^4\right)_\infty^{-1}
\left(xq^2;q \right)_\infty^{-1}
\left(x^3q^6;q^6 \right)_\infty.
\end{align*}

The $\mu$ corresponding to case b have the following form.
The initial segment of $\mu$ is either
$[1,2]^*,2^\bullet$ or $[1,2]^*,[1,3]^\bullet$. After this initial segment, we have blocks corresponding to $j\geq 3$, which are the same as before, namely, for even $j\geq 2$ we have
$[j,j-1]^*,j^\bullet,j^\bullet$ and for an odd $j\geq 3$ it is $j^*$.
The generating function for such $\mu$ is therefore the following:
\begin{align*}
\left( \dfrac{1}{1-xq\cdot xq^2}(1 + xq^2 + xq\cdot xq^3) \right)
&\left(\prod\limits_{j\,\,\text{even}, j\geq 4}
\dfrac{1}{1-xq^j\cdot xq^{j-1}}
(1+xq^j + xq^j\cdot xq^j)
\right)
\left(\prod\limits_{k\,\,\text{odd}, j\geq 3}
\dfrac{1}{1-xq^k}
\right)\\
&=\left(x^2q^{3};q^4\right)_\infty^{-1}
\left(xq^2;q \right)_\infty^{-1}
\left(x^3q^6;q^6 \right)_\infty.
\end{align*}
Combining the cases, we arrive at:
\begin{align}
x^2q^{-1}&\left(x^2q^{-1};q^4\right)_\infty^{-1}
\left(xq^2;q \right)_\infty^{-1}
\left(x^3q^6;q^6 \right)_\infty
+\left(x^2q^{3};q^4\right)_\infty^{-1}
\left(xq^2;q \right)_\infty^{-1}
\left(x^3q^6;q^6 \right)_\infty\\
&=
\left(x^2q^{-1};q^4\right)_\infty^{-1}
\left(xq^2;q \right)_\infty^{-1}
\left(x^3q^6;q^6 \right)_\infty
\\
 &=\left(\sum_{i\geq 0}\dfrac{x^iq^{2i}}{(q;q)_i}\right)
\left(\sum_{j\geq 0}\dfrac{x^{2j}q^{-j}}{(q^4;q^4)_j}\right)
\left(\sum_{k\geq 0}(-1)^k\dfrac{x^{3k}q^{3k^2+3k}}{(q^6;q^6)_k}\right)\nonumber\\
&=\sum_{i,j,k\geq 0}(-1)^k\dfrac{x^{i+2j+3k}q^{2i-j+3k^2+3k}}{(q;q)_i(q^4;q^4)_j(q^6;q^6)_k}. 
\end{align}
Now letting $x^m\mapsto x^mq^{m(m-1)}$ we get:
\begin{align}
\sum_{i,j,k\geq 0}(-1)^k\dfrac{x^{i+2j+3k}q^{(i+2j+3k)(i+2j+3k-1)+2i-j+3k^2+3k}}{(q;q)_i(q^4;q^4)_j(q^6;q^6)_k}.
\end{align}

\begin{rmk}
	This is the only identity in this paper where $x$ does not
	exactly correspond to the number of parts in the sense that for certain partitions (namely those from case a) one has to include a fictitious zero and therefore the power of $x$ has to be shifted by $1$.
\end{rmk}

\subsubsection{Identities 5 and 5a: \eqref{conj:new2} and \eqref{conj:new2a} }
The sum-side of Conjecture \eqref{conj:new2} has the following difference conditions:
\begin{enumerate}
	\item Adjacent parts do not differ by 1.
	\item Even parts do not repeat.
	\item A sub-partition of type $(2j+1) + (2j+1) + (2j+1 + t)$ is not allowed if $t\leq 3$.
	\item A sub-partition of type $(2j+1-t) + (2j+1) + (2j+1)$ is not allowed if $t\leq 2$.
	\item A sub-partition of type $(2j+1)  + (2j+3) + (2j+5)$ is not allowed.
\end{enumerate}
The difference conditions for Conjecture \eqref{conj:new2a} are the same, except for an additional initial condition:
\begin{enumerate}[resume]
	\item No 1s allowed. Alternately, assume that the partition starts with a fictitious $0$.
\end{enumerate}

Let $\pi$ be a counted in the sum-side for Identity $5$. Let $\mu$ be obtained by deleting a $2$-staircase from $\pi$.
Following patterns are forbidden in $\mu$:
\begin{enumerate}
	\item $j, j-1$ for any $j$.
	\item $j, j-2$ for even $j$.
	\item $j, j-2,j-4$ for odd $j$.
	\item $j, j-2, j-3$ for odd $j$.
	\item $j, j-2, j-2$ for odd $j$.
	\item $j, j-2, j-1$ for odd $j$.
	\item $j, j, j-2$ for odd $j$.	
	\item $j, j, j$ for odd $j$.
\end{enumerate}
Therefore, the maximal block in $\mu$ corresponding to an even $j$ is of the form $j^*$, while
the one for an odd $j$ is of the form $[j,j-2,]^*,j^\bullet,j^\bullet$.
We have that the generating function for such $\mu$ is:
\begin{align}
&\prod_{i \mathrm{\,even}}\dfrac{1}{1-xq^i}
\prod_{j \mathrm{\,odd}}\dfrac{1}{1-xq^i\cdot xq^{i-2}}
\prod_{k \mathrm{\,odd}}(1+xq^k + xq^k\cdot xq^k)\\
&=\left(xq,q\right)_\infty^{-1}\left(x^2,q^4\right)_\infty^{-1}\left(x^3q^3;q^6\right)_\infty\nonumber\\
&=\left( \sum_{i\geq 0} \dfrac{x^iq^i}{(q;q)_i}\right)\left(\sum_{j\geq 0}\dfrac{x^{2j}}{(q^4;q^4)_j}\right)\left(\sum_{k\geq 0}(-1)^k\dfrac{x^{3k}q^{3k^2}}{(q^6;q^6)_k}\right)
\nonumber\\
&= \sum_{i,j,k\geq 0}(-1)^k \dfrac{x^{i+2j+3k}q^{i+3k^2}}{(q;q)_i(q^4;q^4)_j(q^6;q^6)_k}.
\end{align}
Reinstating the $2$-staircase, we obtain:
\begin{align}
\sum_{i,j,k\geq 0}(-1)^k \dfrac{x^{i+2j+3k}
	q^{(i+2j+3k)(i+2j+3k-1)+i+3k^2}
}{(q;q)_i(q^4;q^4)_j(q^6;q^6)_k}.
\end{align}

For Identity 5a, we simply change the initial conditions to get that the generating function for $\mu$ is:
\begin{align}
&\prod_{i \mathrm{\,even}}\dfrac{1}{1-xq^i}
\prod_{j \mathrm{\,odd}, j\geq 3}\dfrac{1}{1-xq^i\cdot xq^{i-2}}
\prod_{k \mathrm{\,odd}, k\geq 3}(1+xq^k + xq^k\cdot xq^k)\\
&=\left(xq^2,q\right)_\infty^{-1}\left(x^2q^4,q^4\right)_\infty^{-1}\left(x^3q^9;q^6\right)_\infty\nonumber\\
&=\left( \sum_{i\geq 0} \dfrac{x^iq^{2i}}{(q;q)_i}\right)\left(\sum_{j\geq 0}\dfrac{x^{2j}q^{4j}}{(q^4;q^4)_j}\right)\left(\sum_{k\geq 0}(-1)^k\dfrac{x^{3k}q^{3k^2+6k}}{(q^6;q^6)_k}\right)
\nonumber\\
&= \sum_{i,j,k\geq 0}(-1)^k \dfrac{x^{i+2j+3k}q^{2i+4j+3k^2+6k}}{(q;q)_i(q^4;q^4)_j(q^6;q^6)_k}.
\end{align}
Reinstating the $2$-staircase, we obtain:
\begin{align}
\sum_{i,j,k\geq 0}(-1)^k \dfrac{x^{i+2j+3k}q^{(i+2j+3k)(i+2j+3k-1)+2i+4j+3k^2+6k}}{(q;q)_i(q^4;q^4)_j(q^6;q^6)_k}.
\end{align}

\subsubsection{Identities 6 and 6a: \eqref{conj:new3} and \eqref{conj:new3a}}
The sum-side of Conjecture \eqref{conj:new3} has the following difference conditions:
\begin{enumerate}
	\item No parts repeat.
	\item Adjacent parts do not differ by 1 if the smaller part is even. 
	\item A sub-partition of type $(2j)  + (2j+2) + (2j+4)$ is not allowed. 
	\item A sub-partition of type $(2j+1) + (2j+2) + (2j+4)$ is not allowed. 
	\item A sub-partition of type $(2j+1) + (2j+3) + (2j+4)$ is not allowed. 
\end{enumerate}
The difference conditions for Conjecture \eqref{conj:new3a} are the same, except for an additional initial condition:
\begin{enumerate}[resume]
	\item Smallest part is at least 3.
\end{enumerate}

Let $\pi$ be a counted in the sum-side for Identity $6$. Let $\mu$ be obtained by deleting a $2$-staircase from $\pi$.
Following patterns are forbidden in $\mu$:
\begin{enumerate}
	\item $j,j-2$ for any $j$.
	\item $j,j-1$ if $j$ is even.
	\item $j,j,j$ if $j$ is even.
	\item $j,j-1,j-1$ if $j$ is odd.
	\item $j,j,j-1$ if $j$ is odd.
\end{enumerate}
It is clear that the maximal block in $\mu$ corresponding to an even $j$ is $j^\bullet,j^\bullet$ while that corresponding to an odd $j$ is $[j,j-1]^*,j^*$.
Thus, the generating function for $\mu$ is:
\begin{align}
&\prod_{i\mathrm{\,even}}(1+xq^i+xq^i\cdot xq^i)
\prod_{j\mathrm{\,odd}}\dfrac{1}{(1-xq^j\cdot x q^{j-1})(1-xq^j) }\\
&=\left(xq;q\right)_\infty^{-1}\left(x^2q;q^4\right)_{\infty}^{-1}\left(x^3q^6;q^6\right)_\infty
\nonumber\\
&=\left(\sum_{i\geq 0}\dfrac{x^iq^i}{(q;q)_i}\right)
\left(\sum_{j\geq 0}\dfrac{x^{2j}q^j}{(q^4;q^4)_j}\right)\left(\sum_{k\geq 0}(-1)^k\dfrac{x^{3k}q^{3k^2+3k}}{(q^6;q^6)_k}\right)\nonumber\\
&=\sum_{i,j,k\geq 0}(-1)^k\dfrac{x^{i+2j+3k}q^{i+j+3k^2+3k}}{(q;q)_i(q^4;q^4)_j(q^6;q^6)_k}
\end{align}
Letting $x^m\mapsto x^mq^{m(m-1)}$:
\begin{align}
\sum_{i,j,k\geq 0}(-1)^k\dfrac{x^{i+2j+3k}q^{(i+2j+3k)(i+2j+3k-1)+i+j+3k^2+3k}}{(q;q)_i(q^4;q^4)_j(q^6;q^6)_k}
\end{align}

For $6a$ we incorporate the initial conditions:
\begin{align}
&\prod_{i\mathrm{\,even}, i\geq 4}(1+xq^i+xq^i\cdot xq^i)
\prod_{j\mathrm{\,odd},j\geq 3}\dfrac{1}{(1-xq^j\cdot x q^{j-1})(1-xq^j) }\\
&=\left(xq^3;q\right)_\infty^{-1}\left(x^2q^5;q^4\right)_{\infty}^{-1}\left(x^3q^{12};q^6\right)_\infty
\nonumber\\
&=\left(\sum_{i\geq 0}\dfrac{x^iq^{3i}}{(q;q)_i}\right)
\left(\sum_{j\geq 0}\dfrac{x^{2j}q^{5j}}{(q^4;q^4)_j}\right)\left(\sum_{k\geq 0}(-1)^k\dfrac{x^{3k}q^{3k^2+9k}}{(q^6;q^6)_k}\right)\nonumber\\
&=\sum_{i,j,k\geq 0}(-1)^k\dfrac{x^{i+2j+3k}q^{3i+5j+3k^2+9k}}{(q;q)_i(q^4;q^4)_j(q^6;q^6)_k}
\end{align}
Letting $x^m\mapsto x^mq^{m(m-1)}$:
\begin{align}
\sum_{i,j,k\geq 0}(-1)^k\dfrac{x^{i+2j+3k}q^{(i+2j+3k)(i+2j+3k-1)+3i+5j+3k^2+9k}}{(q;q)_i(q^4;q^4)_j(q^6;q^6)_k}.
\end{align}

\section{Analytic sum-sides for some previous conjectures from \cite{KR} and \cite{R}}

In \cite{KR} we had conjectured six new partition identities and three further conjectures were given by one of the authors in \cite{R}.
We now provide analytic sum-sides for some of these identities.
For all the identities in this section, we shall use $1$-staircases to arrive at the analytic sum-sides.
To describe these identities, we first need a few definitions.

\begin{defi}
A partition $\pi=\lambda_1+\lambda_2+\cdots+\lambda_m$ written in a weakly increasing order
is said to have ``difference at least $k$ at distance $d$'' if for all $j$, $\lambda_{j+d}-\lambda_j\geq k$.
For example, the first Rogers-Ramanujan identity recalled above enumerates partitions satisfying 
difference at least $2$ at distance $1$.	
\end{defi}

\begin{defi} Fix an integer $i$.
	A partition $\pi=\lambda_1+\lambda_2+\cdots+\lambda_m$ is said to satisfy ``Condition($i$)''
	if $\pi$ has difference at least $3$ at distance $3$
	such that if parts at distance two differ by at most $1$, then their sum (together with the intermediate part) is congruent to $i \pmod 3$.
\end{defi}

\subsection{Analytic forms for some identities from \cite{KR}}
We consider identities \cite[$I_5$]{KR} and \cite[$I_6$]{KR}.
\subsubsection{Identity $I_5$ from \cite{KR}}
This conjectural identity states that:
\begin{quote}
	The number of partitions of a non-negative integer into parts congruent to $1,$ $3,$ $4,$ $6,$ $7,$ $10,$ or $11$ (mod $12$)
	is the same as the number of partitions satisfying Condition($1$)
 with	at most one appearance of the part 1.
\end{quote}

The partition-theoretic sum-side 
has the following equivalent formulation as counting the partitions
$\pi$ which forbid the following patterns:
\begin{enumerate}
	\item $i+i+i$ for any $i$.
	\item $i+(i+1)+(i+1)$ for any $i$.
\item $i+i+(i+1)+(i+2)$ for any $i$.
\item $i+i+(i+2)+(i+2)$ for any $i$.
\item $1$ is forbidden to appear more than once.
\end{enumerate}
Consider such a partition $\pi$  and delete a $1$-staircase to obtain a jagged partition $\mu$.
$\mu$ forbids the following patterns:
\begin{enumerate}
	\item $j,j-1,j-2$ for any $j$.
	\item $j,j,j-1$ for any $j$.
\item $j,j-1,j-1,j-1$ for any $j$.	
\item $j,j-1,j,j-1$ for any $j$.	
\item $\mu$ can not begin with $1,0$.
\end{enumerate}
We get that the maximal block corresponding to any $j\geq 2$ in $\mu$ is of the form $[j,j-1,j-1]^*,[j,j-1]^\bullet, j^*$.
The maximal block corresponding to $1$ is $1^*$.
We have the following generating function for such jagged partitions $\mu$.
\begin{align}
&\dfrac{1}{1-xq}\prod_{j\geq 2}\left(\dfrac{1}{1-xq^j\cdot xq^{j-1}\cdot xq^{j-1}}(1+xq^j\cdot xq^{j-1})\dfrac{1}{1-xq^j}\right)\\
&=\left(xq;q\right)_\infty^{-1}\left(-x^2q^{3};q^2\right)\left(x^3q^4;q^3\right)_\infty^{-1}\nonumber\\
&=\left(\sum_{i\geq 0}\dfrac{x^iq^i}{(q;q)_i}\right)
\left(\sum_{j\geq 0}\dfrac{x^{2j}q^{j^2+2j}}{(q^2;q^2)_j}\right)\left(\sum_{k\geq 0}\dfrac{x^{3k}q^{4k}}{(q^3;q^3)_k}\right)\nonumber\\
&=\sum_{i,j,k\geq 0}\dfrac{x^{i+2j+3k}q^{i+j^2+2j+4k}}{(q;q)_i(q^2;q^2)_j(q^3;q^3)_k}.
\end{align}
Reinstating the $1$-staircase, i.e., $x^m\mapsto x^mq^{m(m-1)/2}$:
\begin{align}
\sum_{i,j,k\geq 0}\dfrac{x^{i+2j+3k}q^{
	\frac{(i+2j+3k)(i+2j+3k-1)}{2} + i + j^2 + 2j + 3k	
	}}{(q;q)_i(q^2;q^2)_j(q^3;q^3)_k}.
\end{align}

\subsubsection{Identity $I_6$ from \cite{KR}}
This conjectural identity states that:
\begin{quote}
The number of partitions of a non-negative integer into parts congruent to $2,$ $3,$ $5,$ $6,$ $7,$ $8,$ or $11$ (mod $12$)
is the same as the number of partitions 
satisfying Condition($2$) 
with smallest part at least 2 and at most one appearance of the part 2.
\end{quote}

The partition-theoretic sum-side has the following equivalent formulation as counting the partitions
$\pi$ which forbid the following patterns:
\begin{enumerate}
	\item $i+i+i$ for any $i$.
	\item $i+i+(i+1)$ for any $i$.
	\item $i+(i+1)+(i+2)+(i+2)$ for any $i$.
	\item $i+i+(i+2)+(i+2)$ for any $i$.
	\item $1$ is forbidden to appear.
	\item $2$ can appear at most once.
\end{enumerate}
Consider such a partition $\pi$ and delete a $1$-staircase to obtain a jagged partition $\mu$.
$\mu$ forbids the following patterns:
\begin{enumerate}
	\item $j,j-1,j-2$ for any $j$.
	\item $j,j-1,j-1$ for any $j$.
	\item $j,j,j,j-1$ for any $j$.	
	\item $j,j-1,j,j-1$ for any $j$.	
	\item $\mu$ can not begin with $1$.
	\item $\mu$ can not begin with $2,1$.
\end{enumerate}
We get that the maximal block corresponding to any $j\geq 3$ in $\mu$ is of the form $[j,j-1]^\bullet,[j,j,j-1]^*, j^*$.
The maximal block corresponding to $2$ is $[2,2,1]^*,2^*$.
We have the following generating function for such jagged partitions $\mu$.
\begin{align}
&\dfrac{1}{1-xq^2\cdot xq^2\cdot xq}\dfrac{1}{1-xq^2}
\prod_{j\geq 3}\left((1+xq^j\cdot xq^{j-1})\dfrac{1}{1-xq^j\cdot xq^{j}\cdot xq^{j-1}}\dfrac{1}{1-xq^j}\right)\\
&=\left(xq^2;q\right)_\infty^{-1}\left(-x^2q^{5};q^2\right)\left(x^3q^5;q^3\right)_\infty^{-1}\label{KRI6:removestair}\\
&=\left(\sum_{i\geq 0}\dfrac{x^iq^{2i}}{(q;q)_i}\right)
\left(\sum_{j\geq 0}\dfrac{x^{2j}q^{j^2+4j}}{(q^2;q^2)_j}\right)\left(\sum_{k\geq 0}\dfrac{x^{3k}q^{5k}}{(q^3;q^3)_k}\right)\nonumber\\
&=\sum_{i,j,k\geq 0}\dfrac{x^{i+2j+3k}q^{2i+j^2+4j+5k}}{(q;q)_i(q^2;q^2)_j(q^3;q^3)_k}.
\end{align}
Reinstating the $1$-staircase, i.e., $x^m\mapsto x^mq^{m(m-1)/2}$:
\begin{align}
\sum_{i,j,k\geq 0}\dfrac{x^{i+2j+3k}q^{
	\frac{(i+2j+3k)(i+2j+3k-1)}{2}	+ 2i + j^2+4j + 5k
		}
	}{(q;q)_i(q^2;q^2)_j(q^3;q^3)_k}.
\end{align}

With a look ahead towards finding further companions to this identity, we now present a different analytic sum side
to \cite[$I_6$]{KR}.
Let us rewrite \eqref{KRI6:removestair} as follows:
\begin{align}
\left(xq^2;q\right)_\infty^{-1}&\left(-x^2q^{5};q^2\right)\left(x^3q^5;q^3\right)_\infty^{-1}
=\left(xq^2;q\right)_\infty^{-1}\dfrac{\left(x^4q^{10};q^4\right)_\infty}{\left(x^2q^5;q^2\right)_\infty}\left(x^3q^5;q^3\right)_\infty^{-1}\\
&=\left(\sum\limits_{i\geq 0}\dfrac{x^iq^{2i}}{(q;q)_i} \right)
\left(\sum\limits_{j\geq 0}\dfrac{x^{2j}q^{5j}}{\left(q^2;q^2\right)_j} \right)
\left(\sum\limits_{k\geq 0}\dfrac{x^{3k}q^{5k}}{\left(q^3;q^3\right)_k} \right)
\left(\sum\limits_{l\geq 0}(-1)^l\dfrac{x^{4l}q^{2l^2+8l}}{\left(q^4;q^4\right)_l} \right)
\nonumber\\
&=\sum\limits_{i,j,k,l\geq 0}(-1)^l\dfrac{x^{i+2j+3k+4l}q^{2i+5j+5k+2l^2+8l}}{(q;q)_i\left(q^2;q^2\right)_j\left(q^3;q^3\right)_k\left(q^4;q^4\right)_l}.
\end{align}
Putting back the $1$-staircase, i.e., $x^m\mapsto x^mq^{m(m-1)/2}$ we arrive at an alternate sum-side:
\begin{align}
\sum\limits_{i,j,k,l\geq 0}(-1)^l\dfrac{x^{i+2j+3k+4l}q^{\frac{(i+2j+3k+4l)(i+2j+3k+4l-1)}{2}+ 2i+5j+5k+2l^2+8l}}{(q;q)_i\left(q^2;q^2\right)_j\left(q^3;q^3\right)_k\left(q^4;q^4\right)_l}.
\label{eqn:KRI6alternate}
\end{align}
Below, we shall vary the linear term in the exponent of $q$ to deduce further companions to this identity.

\subsection{Analytic forms for some identities from \cite{R}}

Motivated by the desire to find certain complementary identities to the conjectures in \cite{KR}, one of the authors
 provided three further conjectures in \cite{R}. Now, we provide analytic sum-sides to \cite[$I_{5a}$]{R}
and \cite[$I_{6a}$]{R}.
\subsubsection{Identity $I_{5a}$ from \cite{R}}
This conjectural identity states that:
\begin{quote}
	The number of partitions of a non-negative integer into parts congruent to $1, 2,
	5, 6, 8, 9,$ or $11$ (mod $12$) is the same as the number of partitions 
	satisfying Condition($2$) such that $1+2+2$ is not allowed to appear in the partition.
\end{quote}

The conditions on the partition-theoretic sum-side of \cite[$I_{5a}$]{R} 
are the same as the one for \cite[$I_6$]{KR}, except for the initial condition:
\cite[$I_{5a}$]{R} forbids the appearance of $1+2+2$ in $\pi$.
This translates to forbidding $1,1,0$ as an initial segment in $\mu$.
In effect, the block corresponding to $1$ in $\mu$ has to be $[1,0,[1,1,0]^*]^\bullet,1^*$.
 Therefore, modifying what we have above appropriately, 
we have the following generating function for such jagged partitions $\mu$.
\begin{align}
&\left(1+\dfrac{x^2q}{1-x^3q^2}\right)\dfrac{1}{1-xq}
\prod_{j\geq 2}\left((1+xq^j\cdot xq^{j-1})\dfrac{1}{1-xq^j\cdot xq^{j}\cdot xq^{j-1}}\dfrac{1}{1-xq^j}\right)\\
&=(1+x^2q-x^3q^2)
\left(xq;q\right)_\infty^{-1}\left(-x^2q^3;q^2\right)\left(x^3q^2;q^3\right)_\infty^{-1}\nonumber\\
&=(1+x^2q-x^3q^2)\left(\sum_{i\geq 0}\dfrac{x^iq^{i}}{(q;q)_i}\right)
\left(\sum_{j\geq 0}\dfrac{x^{2j}q^{j^2+2j}}{(q^2;q^2)_j}\right)\left(\sum_{k\geq 0}\dfrac{x^{3k}q^{2k}}{(q^3;q^3)_k}\right)\nonumber\\
&=(1+x^2q-x^3q^2)\left(\sum_{i,j,k\geq 0}\dfrac{x^{i+2j+3k}q^{i+j^2+2j+5k}}{(q;q)_i(q^2;q^2)_j(q^3;q^3)_k}\right)
\nonumber\\
&=
\sum_{i,j,k\geq 0}
\dfrac{x^{i+2j+3k}q^{i+j^2+2j+5k}+x^{i+2j+3k+2}q^{i+j^2+2j+5k+1}-x^{i+2j+3k+3}q^{i+j^2+2j+2k+2}}{(q;q)_i(q^2;q^2)_j(q^3;q^3)_k}
\end{align}
Reinstating $1$-staircase, we have:
\begin{align}
\sum_{i,j,k\geq 0}
&\left\lbrace
\dfrac{
	x^{i+2j+3k}q^{\frac{(i+2j+3k)(i+2j+3k-1)}{2}+i+j^2+2j+2k}
}{(q;q)_i(q^2;q^2)_j(q^3;q^3)_k}
\right. \nonumber\\
&\qquad+\dfrac{
	x^{i+2j+3k+2}q^{\frac{(i+2j+3k+2)(i+2j+3k+1)}{2}+i+j^2+2j+2k+1}
}{(q;q)_i(q^2;q^2)_j(q^3;q^3)_k}
\nonumber\\
&\qquad-\left.
\dfrac{
		x^{i+2j+3k+3}q^{\frac{(i+2j+3k+3)(i+2j+3k+2)}{2}+i+j^2+2j+2k+2}
}{(q;q)_i(q^2;q^2)_j(q^3;q^3)_k}
\right\rbrace.
\end{align}

\subsubsection{Identity $I_{6a}$ from \cite{R}} 
This conjectural identity states that:
\begin{quote}
	The number of partitions of a non-negative integer into parts congruent to $1, 4,
	5, 6, 7, 9$, or $10$ (mod $12$) is the same as the number of partitions 
	satisfying Condition($1$) such that
	2 is not allowed to appear in the partition.
\end{quote}

The sum-side conditions on this identity are same as the one for \cite[$I_5$]{KR}, except for the initial conditions.
For \cite[$I_{6a}$]{R}, $2$ is forbidden to appear as a part.
Therefore, proceeding just like \cite[$I_5$]{KR} above, the maximal block in $\mu$ corresponding to $j\geq 3$, is of the form $[j,j-1,j-1]^*,[j,j-1]^\bullet,j^*$. However, if $1$ appears in $\mu$, the initial block in $\mu$ has to be either:
$1,0, 1^*,[2,1,1]^*,[2,1]^\bullet,2^*$ or
$1,[2,1,1]^*,[2,1]^\bullet, 2^*$.

The generating function for $\mu$ is:
\begin{align}
&
\left(  \left(xq + xq\cdot x\cdot\dfrac{1}{1-xq}\right)\dfrac{1}{1-xq^2\cdot xq\cdot xq}
(1+xq^2\cdot xq) \dfrac{1}{1-xq^2}
 +1\right)\cdot \nonumber\\
&\quad\prod_{j\geq 3}\left(\dfrac{1}{1-xq^j\cdot xq^{j-1}\cdot xq^{j-1}}(1+xq^j\cdot xq^{j-1})\dfrac{1}{1-xq^j}\right)\\
&=\left({\frac {xq \left(1+ {x}^{2}{q}^{3} \right) }{ \left(1- {x}^{3}{q}^{4}\right) \left(1- x{q}^{2} \right) }}
+{\frac{{x}^{2}q \left(1+ {x}^{2}{q}^{3} \right) }{ \left( 1-xq\right)  \left(1- {x}^{3}{q}^{4} \right)  \left(1- x{q}^{2} \right) }}
   +1\right)
\left(xq^3;q\right)_\infty^{-1}\left(-x^2q^{5};q^2\right)\left(x^3q^7;q^3\right)_\infty^{-1}\nonumber\\
&=
xq\left(xq^2;q\right)_\infty^{-1}\left(-x^2q^{3};q^2\right)\left(x^3q^4;q^3\right)_\infty^{-1}
+x^2q\left(xq;q\right)_\infty^{-1}\left(-x^2q^{3};q^2\right)\left(x^3q^4;q^3\right)_\infty^{-1}\nonumber\\
&\quad +\left(xq^3;q\right)_\infty^{-1}\left(-x^2q^{5};q^2\right)\left(x^3q^7;q^3\right)_\infty^{-1}
\nonumber\\
&=\sum_{i,j,k\geq 0}
\dfrac{x^{i+2j+3k+1}q^{2i+j^2+2j+4k+1}
+ x^{i+2j+3k+2}q^{i+j^2+2j+4k+1}
+ x^{i+2j+3k}q^{3i+j^2+4j+7k}}
{(q;q)_i(q^2;q^2)_j(q^3;q^3)_k}.
\end{align}
Reinstating the $1$-staircase, i.e., $x^m\mapsto x^mq^{m(m-1)/2}$:
\begin{align}
\sum_{i,j,k\geq 0}
&\left\lbrace
\dfrac{x^{i+2j+3k+1}q^{\frac{(i+2j+3k+1)(i+2j+3k)}{2}
		+2i+j^2+2j+4k+1}
}
{(q;q)_i(q^2;q^2)_j(q^3;q^3)_k}
\right.\nonumber\\
&\quad +
\dfrac{
		 x^{i+2j+3k+2}q^{\frac{(i+2j+3k+2)(i+2j+3k+1)}{2}+i+j^2+2j+4k+1}
}
{(q;q)_i(q^2;q^2)_j(q^3;q^3)_k}\nonumber\\
&\left.\quad +
\dfrac{
 x^{i+2j+3k}q^{\frac{(i+2j+3k)(i+2j+3k-1)}{2}+3i+j^2+4j+7k}
}
{(q;q)_i(q^2;q^2)_j(q^3;q^3)_k}\right\rbrace.
\end{align}

\section{Further companions of some previous conjectures from \cite{KR} and \cite{R}}
As promised above, we now vary the linear term in the exponent of $q$ in the expression \eqref{eqn:KRI6alternate}
to conjecture further companions to \cite[$I_6$]{KR}.

\subsection{Analytic forms}
We have the following conjectured identities.
\begin{align}
\sum\limits_{i,j,k,l\geq 0}(-1)^l\dfrac{q^{\frac{(i+2j+3k+4l)(i+2j+3k+4l-1)}{2}+2l^2+ i+3j+6k+6l}}{(q;q)_i\left(q^2;q^2\right)_j\left(q^3;q^3\right)_k\left(q^4;q^4\right)_l}
&=\dfrac{1}{\left(q,q^{3},q^{4},q^{6},q^{8},q^{9},q^{11} ;q^{12}\right)_\infty},
\label{eqn:KRI7}\\
\sum\limits_{i,j,k,l\geq 0}(-1)^l\dfrac{q^{\frac{(i+2j+3k+4l)(i+2j+3k+4l-1)}{2}+2l^2+ 3i+5j+6k+10l}}{(q;q)_i\left(q^2;q^2\right)_j\left(q^3;q^3\right)_k\left(q^4;q^4\right)_l}
&=\dfrac{1}{\left(q^3,q^{4},q^{5},q^{6},q^{7},q^{8},q^{9} ;q^{12}\right)_\infty},
\label{eqn:KRI7a}\\
\sum\limits_{i,j,k,l\geq 0}(-1)^l\dfrac{q^{\frac{(i+2j+3k+4l)(i+2j+3k+4l-1)}{2}+2l^2+ 2i+3j+5k+6l}}{(q;q)_i\left(q^2;q^2\right)_j\left(q^3;q^3\right)_k\left(q^4;q^4\right)_l}
&=\dfrac{1}{\left(q^2,q^{3},q^{4},q^{5},q^{8},q^{9},q^{11} ;q^{12}\right)_\infty}.
\label{eqn:KRI8}
\end{align}
Note that the product-sides of \eqref{eqn:KRI7} and \eqref{eqn:KRI7a} are symmetric.
Search for an identity in which the allowable congruences in the product-side are negatives of those
appearing in \eqref{eqn:KRI8} results in the following identity. 
\begin{align}
\sum_{i,j,k,l\geq 0}
&\left\lbrace 
(-1)^l\dfrac{
q^{\frac{(i+2j+3k+4l)(i+2j+3k+4l-1)}{2} + 2l^2
	+ 2i + 3j + 4k + 6l}
}{(q;q)_i(q^2;q^2)_j(q^3;q^3)_k(q^4;q^4)_l}
+ 
(-1)^l\dfrac{
	q^{\frac{(i+2j+3k+4l+1)(i+2j+3k+4l)}{2} + 2l^2
	+ 2i + 3j + 4k + 6l+1}
}{(q;q)_i(q^2;q^2)_j(q^3;q^3)_k(q^4;q^4)_l}\right.\nonumber\\
&\left.+
(-1)^l\dfrac{
	q^{\frac{(i+2j+3k+4l+2)(i+2j+3k+4l+1)}{2} + 2l^2
		+ 2i + 3j + 4k + 6l+2}
}{(q;q)_i(q^2;q^2)_j(q^3;q^3)_k(q^4;q^4)_l}
+
(-1)^l\dfrac{
	q^{\frac{(i+2j+3k+4l+3)(i+2j+3k+4l+2)}{2} + 2l^2
		+ 2i + 3j + 4k + 6l+4}
}{(q;q)_i(q^2;q^2)_j(q^3;q^3)_k(q^4;q^4)_l}
\right\rbrace\nonumber\\
&\qquad=\dfrac{1}{\left(q,q^3,q^4,q^7,q^8,q^9,q^{10} ;q^{12}\right)_\infty}.
\label{eqn:KRI8a}
\end{align}

\subsection{Partition-theoretic sum-sides} We now present a partition-theoretic interpretation for each of the sum-sides in the identities above.
\subsubsection{Identities $7$ and $7a$: \eqref{eqn:KRI7} and \eqref{eqn:KRI7a}}
The sum-side of these identities count partitions $\pi$ forbidding the following patterns:
\begin{enumerate}
	\item $i + (i+1) + (i+1)$. 
	\item $i + i + (i+1)$.
	\item $i + (i+2) + (i+2) + (i+2)$.
	\item $i + i + i + (i+2)$.
    \item $i + i + i + i$.
\end{enumerate}
For Identity $7$, i.e., \eqref{eqn:KRI7} the initial conditions are given by a fictitious zero:
\begin{enumerate}
    \item[(6)]  $1+1$ and $2+2+2$ are forbidden to appear.
\end{enumerate}
For Identity $7a$, i.e., \eqref{eqn:KRI7a} the initial conditions are given by:
\begin{enumerate}
	\item[(6)]  Smallest part is at least $3$.
\end{enumerate}

Let us first work with Identity $7$, i.e., \eqref{eqn:KRI7}.
Remove a $1$-staircase from $\pi$ to obtain a jagged partition $\mu$.
It is clear that $\mu$ forbids the following patterns:
\begin{enumerate}
	\item $j,j,j-1$.
	\item $j,j-1,j-1$.
	\item $j,j+1,j,j-1$.
	\item $j,j-1,j-2,j-1$.
	\item $j,j-1,j-2,j-3$.
	\item $\mu$ can not start with $1,0$ and $2,1,0$.
\end{enumerate}
The maximal block in $\mu$ corresponding to a part $j$
is of the shape $[j,j-1,j-2]^*,[j,j-1]^*,j^*$, with the following exception:
we may not have a string of $j,j+1,j,j-1$. 
Moreover, due to the initial conditions on $\pi$, the blocks corresponding to $1$ and $2$ in $\mu$ must be of the form $1^*,[2,1]^*,2^*$.
This translates to the following generating function for $\mu$:
\begin{align}
&\dfrac{1}{1-xq}\cdot \dfrac{1}{1-xq^2\cdot xq}\cdot \dfrac{1}{1-xq^2}\cdot\nonumber \\
&\quad\quad\quad\cdot
\prod\limits_{j\geq 3}\left(\dfrac{1}{1-xq^j\cdot xq^{j-1} \cdot xq^{j-2}}\cdot 
\dfrac{1}{1-xq^j\cdot xq^{j-1}}\cdot \dfrac{1}{1-xq^j} \right)
\cdot
\prod\limits_{j\geq 2}\left(1-xq^{j}\cdot xq^{j+1}\cdot xq^{j}\cdot xq^{j-1}  
 \right)\\
&=\left(xq;q\right)_\infty^{-1}\left(x^2q^3;q^2\right)_\infty^{-1}\left(x^3q^6;q^3\right)_\infty^{-1}
\left(x^4q^8;q^4\right)_\infty\nonumber\\
&=\left(\sum\limits_{i\geq 0}\dfrac{x^iq^{i}}{(q;q)_i} \right)
\left(\sum\limits_{j\geq 0}\dfrac{x^{2j}q^{3j}}{\left(q^2;q^2\right)_j} \right)
\left(\sum\limits_{k\geq 0}\dfrac{x^{3k}q^{6k}}{\left(q^3;q^3\right)_k} \right)
\left(\sum\limits_{l\geq 0}(-1)^l\dfrac{x^{4l}q^{2l^2+6l}}{\left(q^4;q^4\right)_l} \right)
\nonumber\\
&=\sum\limits_{i,j,k,l\geq 0}(-1)^l\dfrac{x^{i+2j+3k+4l}q^{2l^2+i+3j+6k+6l}}{(q;q)_i\left(q^2;q^2\right)_j\left(q^3;q^3\right)_k\left(q^4;q^4\right)_l}.
\end{align}
Putting back the $1$-staircase, i.e., $x^m\mapsto x^mq^{m(m-1)/2}$ we get:
\begin{align}
\sum\limits_{i,j,k,l\geq 0}(-1)^l\dfrac{x^{i+2j+3k+4l}q^{\frac{(i+2j+3k+4l)(i+2j+3k+4l-1)}{2}+2l^2+ i+3j+6k+6l}}{(q;q)_i\left(q^2;q^2\right)_j\left(q^3;q^3\right)_k\left(q^4;q^4\right)_l}.
\end{align}

For Identity $7a$, i.e., \eqref{eqn:KRI7a} we merely change the initial conditions
to get the following generating function for $\mu$:
\begin{align}
&
\prod\limits_{j\geq 3}\left(\dfrac{1}{1-xq^j\cdot xq^{j-1} \cdot xq^{j-2}}\cdot 
\dfrac{1}{1-xq^j\cdot xq^{j-1}}\cdot \dfrac{1}{1-xq^j} \right)
\cdot\prod\limits_{j\geq 3}\left(1-xq^{j}\cdot xq^{j+1}\cdot xq^{j}\cdot xq^{j-1}  
\right)\\
&=\left(xq^3;q\right)_\infty^{-1}\left(x^2q^5;q^2\right)_\infty^{-1}\left(x^3q^6;q^3\right)_\infty^{-1}
\left(x^4q^{12};q^4\right)_\infty\nonumber\\
&=\left(\sum\limits_{i\geq 0}\dfrac{x^iq^{3i}}{(q;q)_i} \right)
\left(\sum\limits_{j\geq 0}\dfrac{x^{2j}q^{5j}}{\left(q^2;q^2\right)_j} \right)
\left(\sum\limits_{k\geq 0}\dfrac{x^{3k}q^{6k}}{\left(q^3;q^3\right)_k} \right)
\left(\sum\limits_{l\geq 0}(-1)^l\dfrac{x^{4l}q^{2l^2+10l}}{\left(q^4;q^4\right)_l} \right)
\nonumber\\
&=\sum\limits_{i,j,k,l\geq 0}(-1)^l\dfrac{x^{i+2j+3k+4l}q^{2l^2+3i+5j+6k+10l}}{(q;q)_i\left(q^2;q^2\right)_j\left(q^3;q^3\right)_k\left(q^4;q^4\right)_l}.
\end{align}
Putting back the $1$-staircase, i.e., $x^m\mapsto x^mq^{m(m-1)/2}$ we get:
\begin{align}
\sum\limits_{i,j,k,l\geq 0}(-1)^l\dfrac{x^{i+2j+3k+4l}q^{\frac{(i+2j+3k+4l)(i+2j+3k+4l-1)}{2}+2l^2 +3i+5j+6k+10l}}{(q;q)_i\left(q^2;q^2\right)_j\left(q^3;q^3\right)_k\left(q^4;q^4\right)_l}.
\end{align}

\subsubsection{Identity $8$: \eqref{eqn:KRI8}}
The sum-side of this identity counts partitions $\pi$ forbidding the following patterns:
\begin{enumerate}
	\item $i+i+i$.
	\item $i+i+(i+1)$.
	\item $i+(i+1)+(i+2)+(i+2)$.
	\item $i+(i+1)+(i+2)+(i+3)$.
	\item $i+(i+1)+(i+1)+(i+3)+(i+3)$.
	\item Initial conditions are given by two fictitious zeros, i.e., $1$ is forbidden to appear as a part.
\end{enumerate}
Removing a staircase, we see that $\mu$ must forbid the following patters:
\begin{enumerate}
	\item $j,j-1,j-2$.
	\item $j,j-1,j-1$.
	\item $j,j,j,j-1$.
	\item $j,j,j,j$.
	\item $j,j,j-1,j,j-1$.
	\item $\mu$ does not start with a $1$.
\end{enumerate}
The maximal block corresponding to a part $j$ in $\mu$ is therefore of the shape
$[j,j-1]^*,[j,j,j-1]^*,j^\bullet, j^\bullet, j^\bullet$
and we get the following generating function for $\mu$:
\begin{align}
&
\prod\limits_{j\geq 2}\left(
\dfrac{1}{1-xq^j\cdot xq^{j-1}}\cdot 
\dfrac{1}{1-xq^j\cdot xq^{j-1} \cdot xq^{j-1}}
\cdot\left(1 + xq^{j} + xq^j\cdot xq^j+ xq^j\cdot xq^j\cdot xq^j\right) \right)
\\
&=\left(xq^2;q\right)_\infty^{-1}\left(x^2q^3;q^2\right)_\infty^{-1}\left(x^3q^5;q^3\right)_\infty^{-1}
\left(x^4q^{8};q^4\right)_\infty\nonumber\\
&=\left(\sum\limits_{i\geq 0}\dfrac{x^iq^{2i}}{(q;q)_i} \right)
\left(\sum\limits_{j\geq 0}\dfrac{x^{2j}q^{3j}}{\left(q^2;q^2\right)_j} \right)
\left(\sum\limits_{k\geq 0}\dfrac{x^{3k}q^{5k}}{\left(q^3;q^3\right)_k} \right)
\left(\sum\limits_{l\geq 0}(-1)^l\dfrac{x^{4l}q^{2l^2+6l}}{\left(q^4;q^4\right)_l} \right)
\nonumber\\
&=\sum\limits_{i,j,k,l\geq 0}(-1)^l\dfrac{x^{i+2j+3k+4l}q^{2l^2+2i+3j+5k+6l}}{(q;q)_i\left(q^2;q^2\right)_j\left(q^3;q^3\right)_k\left(q^4;q^4\right)_l}.
\end{align}
Putting back the $1$-staircase, i.e., $x^m\mapsto x^mq^{m(m-1)/2}$ we have:
\begin{align}
\sum\limits_{i,j,k,l\geq 0}(-1)^l\dfrac{x^{i+2j+3k+4l}q^{\frac{(i+2j+3k+4l)(i+2j+3k+4l-1)}{2}+2l^2 +2i+3j+5k+6l}}{(q;q)_i\left(q^2;q^2\right)_j\left(q^3;q^3\right)_k\left(q^4;q^4\right)_l}.
\end{align}

\subsubsection{Identity $8a$: \eqref{eqn:KRI8a}} 
The sum-side of this identity counts partitions forbidding the following patterns.
\begin{enumerate}
	\item $i+i+i$.
	\item $i+(i+1) + (i+1)$.
	\item $i+i + (i+1)+(i+2)$.
	\item $i+(i+1)+(i+2)+(i+3)$.
	\item $i+i+(i+2)+(i+2)+(i+3)$.
	\item $1+1$.
	\item $1+2+3$.
	\item $2+2+3$.	
\end{enumerate}
Removing a $1$-staircase, we see that $\mu$ forbids:
\begin{enumerate}
	\item $j, j-1,j-2$.
	\item $j,j,j-1$.
	\item $j,j-1,j-1,j-1$.
	\item $j,j,j,j$.
	\item $j,j-1,j,j-1,j-1$.
	\item $\mu$ does not start with $1,0$ or $1,1,1$ or $2,1,1$.
\end{enumerate} 
It is now clear that for the maximal block in $\mu$ corresponding to a part $j\geq 3$ is $[j,j-1,j-1]^*,[j,j-1]^*,j^\bullet,j^\bullet,j^\bullet$.
If $\mu$ does not start with $1$, then the block corresponding to $2$ is $[2,1]^*,2^\bullet,2^\bullet,2^\bullet$.
However, if $\mu$ does start with $1$, then the blocks corresponding to $1$ and $2$ match:
$1,1^\bullet,[2,1,1]^*,[2,1]^*,2^\bullet,2^\bullet,2^\bullet$.
Combining, we get that the generating function of such $\mu$ is:
\begin{align}
&(xq+x^2q^2)\prod_{j\geq 2}
\left(\dfrac{1}{1-xq^j\cdot xq^{j-1}\cdot xq^{j-1}}\cdot
\dfrac{1}{1-xq^j\cdot xq^{j-1}}\cdot
(1+xq^j + xq^{j}\cdot xq^j +xq^{j}\cdot xq^{j}\cdot xq^j)
\right)\nonumber\\
&+
\dfrac{1+xq^2+x^2q^4+
	x^3q^6}{1-xq^2\cdot xq}
\prod_{j\geq 3}\left(\dfrac{1}{1-xq^j\cdot xq^{j-1}\cdot xq^{j-1}}\cdot
\dfrac{1}{1-xq^j\cdot xq^{j-1}}\cdot
(1+xq^j + xq^{j}\cdot xq^j +xq^{j}\cdot xq^{j}\cdot xq^j)
\right)\\
& =
(xq+x^2q^2)\left(xq^2;q\right)_\infty^{-1}
\left(x^2q^3;q^2\right)_\infty^{-1}
\left(x^3q^4;q^3\right)_\infty^{-1}
\left(x^4q^8;q^4\right)_\infty
 +
\left(xq^2;q\right)_\infty^{-1}
\left(x^2q^3;q^2\right)_\infty^{-1}
\left(x^3q^7;q^3\right)_\infty^{-1}
\left(x^4q^8;q^4\right)_\infty\nonumber\\
&=
\left(1+xq+x^2q^2-x^3q^4\right)
\left(xq^2;q\right)_\infty^{-1}
\left(x^2q^3;q^2\right)_\infty^{-1}
\left(x^3q^4;q^3\right)_\infty^{-1}
\left(x^4q^8;q^4\right)_\infty\nonumber\\
&=\left(1+xq+x^2q^2-x^3q^4\right)
\left(\sum_{i\geq 0}\dfrac{x^iq^{2i}}{\left(q;q\right)_i} \right)
\left(\sum_{j\geq 0}\dfrac{x^{2j}q^{3j}}{\left(q^2;q^2\right)_{j}} \right)
\left(\sum_{k\geq 0}\dfrac{x^{3k}q^{4k}}{\left(q^3;q^3\right)_{k}} \right)
\left(\sum_{l\geq 0}(-1)^l\dfrac{x^{4l}q^{2l^2+6l}}{\left(q^4;q^4\right)_{l}} \right)
\nonumber\\
&=\sum_{i,j,k,l\geq 0}
\left\lbrace 
(-1)^l\dfrac{
	x^{i+2j+3k+4l}q^{2l^2
		+ 2i + 3j + 4k + 6l}
}{(q;q)_i(q^2;q^2)_j(q^3;q^3)_k(q^4;q^4)_l}
+ 
(-1)^l\dfrac{
	x^{i+2j+3k+4l+1}q^{2l^2
		+ 2i + 3j + 4k + 6l+1}
}{(q;q)_i(q^2;q^2)_j(q^3;q^3)_k(q^4;q^4)_l}\right.\nonumber\\
&\left. +
(-1)^l\dfrac{
	x^{i+2j+3k+4l+2}q^{2l^2
		+ 2i + 3j + 4k + 6l+2}
}{(q;q)_i(q^2;q^2)_j(q^3;q^3)_k(q^4;q^4)_l}
+
(-1)^l\dfrac{
	x^{i+2j+3k+4l+3}q^{2l^2
		+ 2i + 3j + 4k + 6l+4}
}{(q;q)_i(q^2;q^2)_j(q^3;q^3)_k(q^4;q^4)_l}
\right\rbrace.
\end{align}
Reinstating the $1$-staircase, we get:
\begin{align}
\sum_{i,j,k,l\geq 0}
&\left\lbrace 
(-1)^l\dfrac{
	x^{i+2j+3k+4l}q^{\frac{(i+2j+3k+4l)(i+2j+3k+4l-1)}{2}+2l^2
		+ 2i + 3j + 4k + 6l}
}{(q;q)_i(q^2;q^2)_j(q^3;q^3)_k(q^4;q^4)_l}\right.
\nonumber\\
&+ 
(-1)^l\dfrac{
	x^{i+2j+3k+4l+1}q^{\frac{(i+2j+3k+4l+1)(i+2j+3k+4l)}{2}+2l^2
		+ 2i + 3j + 4k + 6l+1}
}{(q;q)_i(q^2;q^2)_j(q^3;q^3)_k(q^4;q^4)_l}\nonumber\\
& +
(-1)^l\dfrac{
	x^{i+2j+3k+4l+2}q^{\frac{(i+2j+3k+4l+2)(i+2j+3k+4l+1)}{2}+2l^2
		+ 2i + 3j + 4k + 6l+2}
}{(q;q)_i(q^2;q^2)_j(q^3;q^3)_k(q^4;q^4)_l}\nonumber\\
&\left. +
(-1)^l\dfrac{
	x^{i+2j+3k+4l+3}q^{\frac{(i+2j+3k+4l+3)(i+2j+3k+4l+2)}{2}+2l^2
		+ 2i + 3j + 4k + 6l+4}
}{(q;q)_i(q^2;q^2)_j(q^3;q^3)_k(q^4;q^4)_l}
\right\rbrace.
\end{align}

\section{Analytic sum-sides for Capparelli's identities}

Recall Capparelli's identities \cite{C1}
which arose from level $3$ standard modules for $A_2^{(2)}$:
\begin{thm*} For any positive integer $n$ we have that:
	\begin{enumerate}
		\item Number of partitions of $n$ into parts different from $1$ 
		such that the difference of two consecutive parts is at least $2$, and is exactly $2$ or $3$ only if their sum is a multiple of $3$ is the same as the number of partitions in which every part is congruent to $\pm 2$ or  $\pm 3 \,\,(\mathrm{mod}\,12)$.
		\item Number of partitions of $n$ into parts different from $2$ 
		such that the difference of two consecutive parts is at least $2$, and is exactly $2$ or $3$ only if their sum is a multiple of $3$ is the same as the number of partitions into distinct parts congruent to $1$, $3$, $5$, or $0$ $(\text{mod}\,\, 6)$.
	\end{enumerate}
\end{thm*}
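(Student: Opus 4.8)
The plan is to run each of the two Capparelli identities through the same staircase-and-jagged-partition pipeline used for Identities~1--8, with $x$ marking the number of parts and $q$ the number partitioned, but with one extra ingredient forced by the moduli $12$ and $6$: a preliminary dilation by $3$. The reason is that Capparelli's clause ``a difference of $2$ or $3$ is allowed only when the two parts sum to a multiple of $3$'' is not a condition on the sizes of the parts but on their residues modulo $3$: a gap of $2$ between $a$ and $a+2$ is legal precisely when $a\equiv 2\pmod 3$, and a gap of $3$ between $a$ and $a+3$ precisely when $a\equiv 0\pmod 3$. Since $12=3\cdot 4$ and $6=3\cdot 2$, I expect that tracking the residue class modulo $3$ separately and dilating $q\mapsto q^{3}$ on the quotient --- exactly the dilated reworking of the argument of \cite{DL} promised in the introduction --- turns these residue constraints into ordinary local difference constraints, to which staircase removal applies. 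Because \cite{DL} works with jagged \emph{over}partitions, I anticipate that overpartitions --- equivalently, an extra distinct-part color produced by Euler's identity \eqref{eqn:eulerd}, just as the alternating factors arise elsewhere in the paper --- are the right objects here as well.

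Granting this, the mechanical steps are the familiar ones. First I would rewrite the difference conditions of part~(1) --- ``no part~$1$'', ``consecutive difference $\ge 2$'', and the two residue clauses above --- as a short explicit list of forbidden contiguous sub-partitions of $\pi$, written in weakly increasing order. Second, after the dilation I would delete the appropriate staircase from $\pi$ to obtain a jagged (over)partition $\mu$ and translate each forbidden sub-partition of $\pi$ into a forbidden block of $\mu$; I expect the maximal blocks to be captured by regular expressions of the shapes $j^{*}$, $[j,j-1]^{*}$ and the like, just as before. Third, I would read the $(x,q)$-generating function of $\mu$ off block by block as an infinite product, expand each factor by \eqref{eqn:eulerp} and \eqref{eqn:eulerd}, and reinstate the staircase through $x^{m}\mapsto x^{m}q^{s\,m(m-1)/2}$ to arrive at a closed analytic sum-side. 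Part~(2) then costs almost nothing beyond part~(1): its hypotheses differ only in the initial condition (``no part~$2$'' in place of ``no part~$1$''), which is absorbed by truncating the first one or two blocks, exactly as the $4/4a$, $5/5a$ and $6/6a$ pairs were handled.

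The genuine obstacle is the product side, since the staircase argument by itself only identifies the analytic sum-side with the generating function of the difference-condition partitions and says nothing about the infinite products. There are two ways to finish. The economical route is to quote Capparelli's theorem \cite{C1} directly: once the difference-condition count is known to equal the stated product, the staircase computation simply upgrades that equality to the new analytic form, and no further $q$-series work is required. The self-contained route is to prove outright that the multisum produced in the third step equals the target products --- $1/(q^{2},q^{3},q^{9},q^{10};q^{12})_\infty$ in case~(1) and the distinct-parts product over residues $1,3,5,0\pmod 6$ in case~(2). I expect the latter to need a Bailey-pair or Rogers--Ramanujan--Slater-type evaluation rather than elementary manipulation, and it is there, not in the block bookkeeping, that the real difficulty lies.
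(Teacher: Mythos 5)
Your proposal takes essentially the same route as the paper: the paper does not prove Capparelli's identities by $q$-series work at all, but recalls them as known theorems (citing \cite{And-cap,AAG,C2,DL,MP2,TX} for proofs), and it deploys exactly the machinery you describe --- a $3$-staircase removal giving a dilated version of the argument of \cite{DL}, with the mod-$3$ residue clauses turned into forbidden blocks and Euler's identities \eqref{eqn:eulerp}--\eqref{eqn:eulerd} supplying the expansion, the second identity differing only in the initial blocks --- solely to manufacture analytic sum-sides. Your ``economical route'' (cite the known proofs for the equality with the products, and let the staircase computation upgrade it to the new analytic forms) is precisely what the paper does, and you correctly diagnose that the staircase bookkeeping alone proves nothing about the infinite products.
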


The difference conditions common to both the identities forbid consecutive differences equaling $0$ or $1$
and additionally following patterns are forbidden:
\begin{enumerate}
	\item $3j + (3j+2)$.
	\item $(3j+1) + (3j+3)$.
	\item $(3j+1) + (3j+4)$.
	\item $(3j+2) + (3j+5)$.
\end{enumerate}
For the first identity, the initial conditions stipulate that
\begin{enumerate} 
	\item[(7)] No $1$s are allowed.
\end{enumerate}
For the second identity, the initial conditions stipulate that
\begin{enumerate}
	\item[(7)] No $2$s are allowed.
\end{enumerate}

Let us work with the first identity.
What follows is essentially a ``dilated'' version of the argument in \cite{DL}.

Deleting a $3$-staircase, $\mu$ forbids the following patterns:
\begin{enumerate}
	\item $j$, $j-3$.
	\item $j$, $j-2$.
	\item $3j,3j-1$.
	\item $3j+1, 3j$.
	\item $3j+1, 3j+1$.
	\item $3j+2, 3j+2$.
\end{enumerate}
Therefore, one can conclude that the maximal block in $\mu$ corresponding to $3j$ is $(3j)^*$;
the one for $3j+1$ ($j\geq 1$) is $(3j+1)^\bullet$; for $3j+2$ it is $[(3j+2),(3j+1)]^*,(3j+2)^\bullet$.
It is now easy to deduce that the generating function for $\mu$ is:
\begin{align}
&\left(\prod\limits_{j\geq 1} \dfrac{1}{1-xq^{3j}}\right)
\left(\prod\limits_{j\geq 1}1+xq^{3j+1}\right)
\left(\prod\limits_{j\geq 0} \dfrac{1}{1-xq^{3j+2}\cdot xq^{3j+1}}\cdot (1+xq^{3j+2})\right)
\nonumber\\
&=	\left(xq^3;q^3\right)_\infty^{-1}
\left(-xq^2;q^3\right)_\infty\left(-xq^4;q^3\right)_\infty\left(x^2q^3;q^6\right)_\infty^{-1}\label{eqn:Cap1eq1}\\
&=\sum\limits_{i,j,k,l\geq 0}\dfrac{x^{i+j+k+2l}
	q^{3i + 2j + \frac{3j(j-1)}{2} + 4k + \frac{3k(k-1)}{2}  + 3l}
}{\left(q^3;q^3\right)_i\left(q^3;q^3\right)_j\left(q^3;q^3\right)_k\left(q^6;q^6\right)_l}.
\end{align}
Now we put back a $3$-staircase, i.e., $x^m\mapsto x^mq^{3m(m-1)/2}$:
\begin{align}
\sum\limits_{i,j,k,l\geq 0}\dfrac{x^{i+j+k+2l}
	q^{\frac{3(i+j+k+2l)(i+j+k+2l-1)}{2}+3i + 2j + \frac{3j(j-1)}{2} + 4k + \frac{3k(k-1)}{2}  + 3l}
}{\left(q^3;q^3\right)_i\left(q^3;q^3\right)_j\left(q^3;q^3\right)_k\left(q^6;q^6\right)_l},
\end{align}
which is exactly \cite[Equation (2.6)]{DL} with
$r\mapsto k, s\mapsto j, t\mapsto i, v \mapsto l, q\mapsto q^3,
a\mapsto q^{-2}, b\mapsto q^{-4}$.

We may instead arrive at a different analytic sum-side by first simplifying the expression \eqref{eqn:Cap1eq1}.
\begin{align}
&\dfrac{\left(-xq^2;q^3\right)_\infty\left(-xq^4;q^3\right)_\infty}{\left(xq^3;q^3\right)_\infty\left(x^2q^3;q^6\right)_\infty}
=\dfrac{\left(-xq^2;q^3\right)_\infty\left(-xq^4;q^3\right)_\infty}{\left(xq^3;q^3\right)_\infty\left(x^3q^3;q^6\right)_\infty}
\cdot\dfrac{\left(xq^2;q^3\right)_\infty\left(xq^4;q^3\right)_\infty}{\left(xq^2;q^3\right)_\infty\left(xq^4;q^3\right)_\infty}\nonumber\\
&=\dfrac{\left(x^2q^4;q^6\right)_\infty\left(x^2q^8;q^6\right)_\infty}
{\left(xq^2;q^3\right)_\infty\left(xq^3;q^3\right)_\infty\left(xq^4;q^3\right)_\infty\left(x^2q^3;q^6\right)_\infty}
=
\dfrac{\left(x^2q^4;q^6\right)_\infty\left(x^2q^8;q^6\right)_\infty}
{\left(xq^2;q\right)_\infty\left(x^2q^3;q^6\right)_\infty}
\cdot
\dfrac{\left(x^2q^6;q^6\right)_\infty}{\left(x^2q^6;q^6\right)_\infty}\nonumber\\
&=
\dfrac{\left(x^2q^4;q^2\right)_\infty}
{\left(xq^2;q\right)_\infty\left(x^2q^3;q^3\right)_\infty}
=\dfrac{\left(-xq^2;q\right)_\infty\left(xq^2;q\right)_\infty}
{\left(xq^2;q\right)_\infty\left(x^2q^3;q^3\right)_\infty}
=\dfrac{\left(-xq^2;q\right)_\infty}
{\left(x^2q^3;q^3\right)_\infty}
=\left(\sum\limits_{i\geq 0}\dfrac{x^iq^{2i+\frac{i(i-1)}{2}}}{(q;q)_i}\right)
\left(\sum\limits_{j\geq 0}\dfrac{x^{2j}q^{3j}}{\left(q^3;q^3\right)_j}\right)
\nonumber\\
&=\sum\limits_{i,j\geq 0}\dfrac{x^{i+2j}q^{2i+\frac{i(i-1)}{2}+3j}}{(q;q)_i\left(q^3;q^3\right)_j}
\end{align}
Now letting $x^m\mapsto x^mq^{3m(m-1)/2}$:
\begin{align}
\sum\limits_{i,j\geq 0}\dfrac{x^{i+2j}q^{\frac{3(i+2j)(i+2j-1)}{2}+2i+\frac{i(i-1)}{2}+3j}}{(q;q)_i\left(q^3;q^3\right)_j}
=\sum\limits_{i,j\geq 0}\dfrac{x^{i+2j}q^{2i^2+6ij+6j^2}}{(q;q)_i\left(q^3;q^3\right)_j}.
\end{align}

It is straightforward to repeat the above steps to analyze the second identity of Capparelli.
Everything is the same as before except for the initial blocks in $\mu$.
If $1$ appears in $\mu$ then the maximal block corresponding to $1$ and $2$ in $\mu$ is of the shape
$1,[2,1]^*,2^\bullet$. However, if $1$ does not appear in $\mu$ then $\mu$ must start with a part at least $3$.
The generating function for $\mu$ is thus:
\begin{align}
&\left(xq\dfrac{1}{1-xq^2\cdot xq}(1+xq^2) +1\right)
\left(\prod\limits_{j\geq 1} \dfrac{1}{1-xq^{3j}}\right)
\left(\prod\limits_{j\geq 1}1+xq^{3j+1}\right)
\left(\prod\limits_{j\geq 1} \dfrac{1}{1-xq^{3j+2}\cdot xq^{3j+1}}\cdot (1+xq^{3j+2})\right)
\nonumber\\
&=	\left(xq^3;q^3\right)_\infty^{-1}
\left(-xq^5;q^3\right)_\infty\left(-xq;q^3\right)_\infty\left(x^2q^3;q^6\right)_\infty^{-1}\label{eqn:Cap2eq1}\\
&=\sum\limits_{i,j,k,l\geq 0}\dfrac{x^{i+j+k+2l}
	q^{3i + 5j + \frac{3j(j-1)}{2} + k + \frac{3k(k-1)}{2}  + 3l}
}{\left(q^3;q^3\right)_i\left(q^3;q^3\right)_j\left(q^3;q^3\right)_k\left(q^6;q^6\right)_l}.
\end{align}
Putting back a $3$-staircase:
\begin{align}
\sum\limits_{i,j,k,l\geq 0}\dfrac{x^{i+j+k+2l}
	q^{\frac{3(i+j+k+2l)(i+j+k+2l-1)}{2}+3i + 5j + \frac{3j(j-1)}{2} + k + \frac{3k(k-1)}{2}  + 3l}
}{\left(q^3;q^3\right)_i\left(q^3;q^3\right)_j\left(q^3;q^3\right)_k\left(q^6;q^6\right)_l}.
\end{align}
One may again arrive at a different analytic sum-side by simplifying expression \eqref{eqn:Cap2eq1} first:
\begin{align}
&\dfrac{\left(-xq^5;q^3\right)_\infty\left(-xq;q^3\right)_\infty}{\left(xq^3;q^3\right)_\infty\left(x^2q^3;q^6\right)_\infty}
=\dfrac{\left(-xq^5;q^3\right)_\infty\left(-xq;q^3\right)_\infty}{\left(xq^3;q^3\right)_\infty\left(x^3q^3;q^6\right)_\infty}
\cdot\dfrac{\left(xq^5;q^3\right)_\infty\left(xq;q^3\right)_\infty}{\left(xq^5;q^3\right)_\infty\left(xq;q^3\right)_\infty}\nonumber\\
&=\dfrac{\left(x^2q^2;q^6\right)_\infty\left(x^2q^{10};q^6\right)_\infty}
{\left(xq;q^3\right)_\infty\left(xq^3;q^3\right)_\infty\left(xq^5;q^3\right)_\infty\left(x^2q^3;q^6\right)_\infty}
=\dfrac{(1-xq^2)}{(1-x^2q^4)}\dfrac{\left(x^2q^2;q^6\right)_\infty\left(x^2q^{4};q^6\right)_\infty}
{\left(xq;q^3\right)_\infty\left(xq^2;q^3\right)_\infty\left(xq^3;q^3\right)_\infty\left(x^2q^3;q^6\right)_\infty}
\nonumber\\
&=\dfrac{(1-xq^2)}{(1-x^2q^4)}\dfrac{\left(x^2q^2;q^6\right)_\infty\left(x^2q^{4};q^6\right)_\infty}
{\left(xq;q\right)_\infty\left(x^2q^3;q^6\right)_\infty}\cdot\dfrac{\left(x^2q^6;q^6\right)_\infty}{\left(x^2q^6;q^6\right)_\infty}
=\dfrac{(1-xq^2)}{(1-x^2q^4)}\dfrac{\left(x^2q^2;q^2\right)_\infty}
{\left(xq;q\right)_\infty\left(x^2q^3;q^3\right)_\infty}
=\dfrac{(-xq;q)_\infty}{(1+xq^2)\left(x^2q^3;q^3\right)_\infty}
\nonumber\\
&=(1+xq)\dfrac{(-xq^3;q)_\infty}{\left(x^2q^3;q^3\right)_\infty}
=(1+xq)\left(\sum\limits_{i\geq 0}\dfrac{x^iq^{3i + \frac{i(i-1)}{2}}}{(q;q)_i}\right)
\left(\sum\limits_{j\geq 0}\dfrac{x^{2j}q^{3j}}{\left(q^3;q^3\right)_j}\right)\nonumber\\
&=\sum\limits_{i,j\geq 0}\dfrac{x^{i+2j}q^{3i + \frac{i(i-1)}{2}+3j } + x^{i+2j+1}q^{3i + \frac{i(i-1)}{2}+3j+1 }}{(q;q)_i\left(q^3;q^3\right)_j}.
\end{align}
Putting back a $3$-staircase:
\begin{align}
&\sum\limits_{i,j\geq 0}\dfrac{x^{i+2j}q^{\frac{3(i+2j)(i+2j-1)}{2}+3i + \frac{i(i-1)}{2}+3j } + x^{i+2j+1}q^{
		\frac{3(i+2j+1)(i+2j)}{2}+3i + \frac{i(i-1)}{2}+3j+1 }}{(q;q)_i\left(q^3;q^3\right)_j}\nonumber\\
	&=
	\sum\limits_{i,j\geq 0}\dfrac{x^{i+2j}q^{2i^2+6ij+6j^2+i}}{(q;q)_i\left(q^3;q^3\right)_j}
	+\sum\limits_{i,j\geq 0}\dfrac{x^{i+2j+1}q^{2i^2+6ij+6j^2+4i+6j+1}}{(q;q)_i\left(q^3;q^3\right)_j}.
\end{align}
Since Capparelli's identities are true \cite{And-cap,AAG,C2,DL,MP2, TX}, we have:
\begin{thm} The following formal power series identities hold:
\begin{align}
\sum\limits_{i,j\geq 0}\dfrac{q^{2i^2+6ij+6j^2}}{(q;q)_i\left(q^3;q^3\right)_j}
&=\dfrac{1}{\left(q^2,q^3,q^9,q^{10};q^{12}\right)_\infty},\\
\sum\limits_{i,j,k,l\geq 0}\dfrac{
	q^{\frac{3(i+j+k+2l)(i+j+k+2l-1)}{2}+3i + 5j + \frac{3j(j-1)}{2} + k + \frac{3k(k-1)}{2}  + 3l}
}{\left(q^3;q^3\right)_i\left(q^3;q^3\right)_j\left(q^3;q^3\right)_k\left(q^6;q^6\right)_l}
&=
\sum\limits_{i,j\geq 0}\dfrac{q^{2i^2+6ij+6j^2+i}}{(q;q)_i\left(q^3;q^3\right)_j}
+\sum\limits_{i,j\geq 0}\dfrac{q^{2i^2+6ij+6j^2+4i+6j+1}}{(q;q)_i\left(q^3;q^3\right)_j}\nonumber\\
&=\left(-q,-q^3,-q^4,-q^6;q^6\right)_\infty.
\end{align}		
\end{thm}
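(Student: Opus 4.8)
The plan is to obtain both identities by combining the generating-function computations already carried out above with the established truth of Capparelli's two identities, so that the only real content is matching each analytic sum-side to its product-side. First I would record exactly what the preceding staircase analysis has proven. For a partition $\pi$ satisfying Capparelli's first set of difference conditions, deleting a $3$-staircase and then reinstating it via $x^m\mapsto x^m q^{3m(m-1)/2}$ shows that $\sum_{i,j\ge 0} x^{i+2j} q^{2i^2+6ij+6j^2}/((q;q)_i (q^3;q^3)_j)$ equals $\sum_\pi x^{\ell(\pi)} q^{|\pi|}$, where $\ell$ is the number of parts and $|\pi|$ the weight; the same holds for the second set of conditions and the two-variable forms extracted from \eqref{eqn:Cap2eq1}. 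The specialization $x\mapsto 1$ is legitimate because the exponent of $q$ in each summand tends to infinity with the summation indices, so every coefficient of $q^n$ is a finite sum; hence the $x=1$ sums are the ordinary weight generating functions of the two difference-condition families.

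For the first displayed equation I would then invoke Capparelli's first identity (now a theorem, by the references cited just before the statement): the number of partitions of $n$ of the first difference type equals the number of partitions of $n$ into parts $\equiv \pm2,\pm3 \pmod{12}$, whose weight generating function is exactly $1/(q^2,q^3,q^9,q^{10};q^{12})_\infty$. Equating the two generating functions at $x=1$ yields the claimed identity.

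For the second displayed chain there are two separate points. The equality of the four-variable sum and the two-term sum is pure algebra and uses no combinatorics: both arise by expanding the single product \eqref{eqn:Cap2eq1} at $x=1$ --- the former by a direct Euler expansion of each factor through \eqref{eqn:eulerp}--\eqref{eqn:eulerd}, the latter after the rational simplification of \eqref{eqn:Cap2eq1} to $(1+xq)(-xq^3;q)_\infty/(x^2q^3;q^3)_\infty$ --- and the same staircase substitution $x^m\mapsto x^m q^{3m(m-1)/2}$ is applied to both, so the equality survives specialization. Then, exactly as in the first case, either sum is the weight generating function of the second difference-condition family, and Capparelli's second identity identifies this with the number of partitions into distinct parts $\equiv 0,1,3,5 \pmod 6$, that is, with $(-q,-q^3,-q^5,-q^6;q^6)_\infty$. (Here the factor printed as $-q^4$ should read $-q^5$: the residue $5 \pmod 6$ is the one demanded by Capparelli's second identity, and a low-order check of coefficients confirms $-q^5$ rather than $-q^4$.)

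I do not expect a genuine obstacle in this theorem: the hard work sits upstream, in the block/staircase bijection already completed and in the proofs of Capparelli's identities in the cited literature. The only points needing care are the validity of the $x=1$ specialization and the remark that the two sum-forms in the second identity are merely two expansions of one product. If one instead wanted a self-contained $q$-series proof bypassing the combinatorial Capparelli theorem, the burden would move to proving the product evaluations directly --- for instance collapsing $\sum_{i,j} q^{2i^2+6ij+6j^2}/((q;q)_i(q^3;q^3)_j)$ to $1/(q^2,q^3,q^9,q^{10};q^{12})_\infty$ by a Bailey-pair or Jacobi-triple-product argument --- and the four-to-two variable reduction in the second identity would be the most delicate step; but this route is unnecessary given that Capparelli's identities are already theorems.
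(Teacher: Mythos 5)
Your proposal is correct and takes essentially the same route as the paper: the paper's proof consists precisely of citing the (proven) Capparelli identities and combining them with the staircase/jagged-partition computations carried out immediately before the theorem, which is exactly your argument. Your parenthetical correction is also right --- the printed factor $-q^4$ is a typo for $-q^5$, since Capparelli's second identity (as stated earlier in the paper) concerns distinct parts congruent to $0,1,3,5 \pmod 6$, and the low-order expansion of the sum-side, $1+q+q^3+q^4+q^5+2q^6+2q^7+\cdots$, matches $\left(-q,-q^3,-q^5,-q^6;q^6\right)_\infty$ but not $\left(-q,-q^3,-q^4,-q^6;q^6\right)_\infty$, which already disagrees at the coefficient of $q^4$.
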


\end{document}